\numberwithin{equation}{section}
\theoremstyle{plain}
\newtheorem{theorem}{Theorem}[section]
\newtheorem{lemma}{Lemma}[section]
\newtheorem{proposition}{Proposition}[section]
\newtheorem{corollary}{Corollary}[section]
\newtheorem{remark}{Remark}[section]
\begin{document}

\begin{frontmatter}
\title{On structure of regular Dirichlet subspaces for one-dimensional Brownian motion}
\runtitle{On structure of regular Dirichlet subspaces}
\thankstext{T1}{{The first named author is partially supported by a joint grant (No. Y690021G22) of CPSF and CAS. The second named author is partially supported by NSFC No. 11271240.}}

\begin{aug}
\author{\fnms{Liping} \snm{Li} \thanksref{t1}
\ead[label=e1]{lipingli10@fudan.edu.cn}}
\and \author{\fnms{Jiangang} \snm{Ying}
\ead[label=e2]{jgying@fudan.edu.cn}}
\ead[label=u1,url]{http://homepage.fudan.edu.cn/jgying}

\thankstext{t1}{Corresponding author.}
\runauthor{Li L. and Ying J.}

\affiliation{Fudan University 
}

\address{220 Handan Road, Fudan University
              \\
Shanghai, China. 200433.\\
\printead{e1}\\
\phantom{E-mail:\ }\printead*{e2}}

\end{aug}

\begin{abstract}
The main purpose of this paper is to explore the structure of regular Dirichlet subspaces of 1-dim Brownian motion.
As stated in \cite{FMG} every such regular Dirichlet subspace can be characterized by a measure-dense set $G$.
When $G$ is open, $F=G^c$ is the
boundary of $G$ and, before leaving $G$, the diffusion associated with the regular Dirichlet subspace is nothing but Brownian motion. Their traces on $F$ still inherit the inclusion relation,
in other words, the trace Dirichlet form of regular Dirichlet subspace on $F$ is still a regular Dirichlet subspace of trace Dirichlet form of one-dimensional Brownian motion on $F$.
Moreover we shall prove that the trace of Brownian motion on $F$ may be decomposed into two part, one is the trace of the regular Dirichlet subspace on $F$, which has only
the non-local part and the other comes from the orthogonal complement of the regular Dirichlet subspace, which has only the local part. Actually
the orthogonal complement of regular Dirichlet subspace corresponds to {a time-changed absorbing Brownian motion} after a darning transform.
\end{abstract}

\begin{keyword}[class=MSC]
\kwd[Primary ]{31C25}
\kwd{60J55}
\kwd[; secondary ]{60J60}
\end{keyword}

\begin{keyword}
\kwd{regular Dirichlet subspaces}
\kwd{Trace Dirichlet forms}
\kwd{Time-changed Brownian motions}
\end{keyword}

\end{frontmatter}

\section{Introduction}\label{INTRO}
A Dirichlet form is an analytic characterization of a symmetric Markov process and its structure
means roughly its Beurling-Deny decomposition with its domain.
The main purpose of the current paper is to describe the structure of a regular Dirichlet subspace
and its orthogonal complement for 1-dimensional Brownian motion
using `trace' method developed by \cite{CFJ}, through which, we may more easily observe that
a Dirichlet subspace differs from the original form
not only in their domains but also in Beurling-Deny decompositions, which give more information
about the structure.

Roughly speaking a regular Dirichlet subspace of a Dirichlet form is a subspace
which is also a regular Dirichlet form on the same state space. Since a regular
Dirichlet subspace corresponds to a unique symmetric Markov process, 
the problem of existence and characterization of regular Dirichlet subspaces of a Dirichlet form 
is a basic and interesting problem for us to explore 
in the theory of Dirichlet forms.
This problem was raised and discussed by the second author and his co-authors in \cite{FMG}, in which 
a complete characterization for regular Dirichlet subspaces of one-dimensional Brownian motion was given.
In fact such a regular Dirichlet subspace may be characterized by a measure-dense set $G$,
by which we mean that $G\cap(a,b)$ has positive measure for any non-empty interval $(a,b)\subset\mathbb{R}$
and the Markov process associated with the regular Dirichlet subspace is the one-dimensional diffusion
with scaling function $$s(x)=\int_0^x 1_G(y)dy$$ and speed measure being Lebesgue measure. 
By the way, {a complete characterization of regular Dirichlet subspaces even} for multi-dimensional Brownian motions has not been obtained
due to lack of something like scaling functions in 1-dimensional case. 

With the characterization at hand, it is natural to be 
curious about the structure of regular Dirichlet subspaces. 
It is not intuitive how a regular Dirichlet subspace differs and how the corresponding process
moves because, by observing the forms, the subspace seems to be only different from the original form
in their domains. 
However in the case of 1-dim Brownian motion, we may see from its scaling function that the process corresponding 
to the regular Dirichlet subspace moves like Brownian motion
on $G$ more or less but spends almost no time on $G^c$ though it runs all over $G^c$ which has positive measure.
We aim to find a more intuitive picture to describe 
the precise structure of regular Dirichlet subspaces of one-dimensional Brownian motion.

As mentioned, the approach we use to explore the structure 
is the method of trace. Generally a Dirichlet form may be decomposed
into a minimal process on an open subset $G$ and its orthogonal complement, which is called the trace of Dirichlet
form on $G^c$. Usually the trace is the Dirichlet form corresponding to the process obtained by the original
process through a time change induced by a positive continuous additive functional. How to describe the trace of a form dates back to J. Douglas who gave a complete
characterization of the trace of the form, associated to Brownian motion living on closed unit disc, on its boundary in \cite{Douglas}. The similar
characterization has been done for general symmetric Dirichlet form by the second author and his co-authors in \cite{CM} and \cite{CFJ}.
In the current article, we shall prove that
when the measure-dense $G$ is open and $F=G^c$ has positive measure, the trace $\check{X}$ of 1-dim Brownian motion $X$ on $F=G^c$
is non-trivial and (its Beurling-Deny decomposition) has both diffusion part and jump part as expected,  the trace $\check X^{(s)}$ of the regular Dirichlet subspace $X^{(s)}$ of $X$ is a regular Dirichlet subspace
of the trace Brownian motion
$\check{X}$ which has only the jump part, and finally the remaining part is the orthogonal complement of the regular Dirichlet subspace
whose Beurling-Deny decomposition has only the diffusion part.
In addition, we show that the regular representation of the orthogonal complement is the darning transform of Brownian motion.
From this result we can see that though the process $X^{(s)}$ corresponding to the regular Dirichlet subspace moves continuously on $F$ but
it looks like jumping due to the special structure of $F$. In other words, it `flies like the wind and leaves no shadow'.

Let $E$ be a locally compact separable metric space and $\xi$ a Radon measure fully supported on $E$. We refer the terminologies of Dirichlet forms on the Hilbert space $L^2(E,\xi)$ to \cite{CM} and \cite{FU}. Assume that $(\mathcal{E}^1,\mathcal{F}^1)$ and $(\mathcal{E}^2,\mathcal{F}^2)$ are two regular Dirichlet forms on $L^2(E,\xi)$. Then $(\mathcal{E}^1,\mathcal{F}^1)$ is called a \emph{regular Dirichlet subspace} of $(\mathcal{E}^2,\mathcal{F}^2)$ 
if
\begin{equation}
	\mathcal{F}^1\subset \mathcal{F}^2,\quad \mathcal{E}^2(u,v)=\mathcal{E}^1(u,v),\quad u,v\in \mathcal{F}^1.
\end{equation}
If in addition $\mathcal{F}^1$ is a proper subset of $\mathcal{F}^2$, then we say $(\mathcal{E}^1,\mathcal{F}^1)$ is a \emph{proper regular Dirichlet subspace} of $(\mathcal{E}^2,\mathcal{F}^2)$.

We denote the Lebesgue measure on $\mathbb{R}$ by $m$. It is well known that the 1-dimensional Brownian motion is symmetric with respect to $m$ and its associated Dirichlet form on $L^2(\mathbb{R})$ is $(\mathcal{E,F}):=(\frac{1}{2}\mathbf{D},H^1(\mathbb{R}))$, where $H^1(\mathbb{R})$ is the 1-Sobolev space and for any $u,v\in H^1(\mathbb{R})$,
\[
	\mathbf{D}(u,v)=\int_\mathbb{R} u'(x)v'(x)dx.
\] As stated in \cite{FMG} and \cite{XPJ}, fix a strictly increasing and absolutely continuous function $s$ on $\mathbb{R}$ satisfying
\begin{equation}\label{EQSX}
	s'(x)=0 \text{ or } 1 \quad\text{a.e.}
\end{equation}
 and define a symmetric bilinear form $(\mathcal{E}^{(s)},\mathcal{F}^{(s)})$ on $L^2(\mathbb{R})$ by
\begin{equation}\label{SBM}
 \begin{aligned}
       & \mathcal{F}^{(s)}:= \{u\in L^2(\mathbb{R}):\;u\ll s,\;\int_{\mathbb{R}}\left(\frac{du}{ds}\right)^2ds<\infty\}, \\
      &  \mathcal{E}^{(s)}(u,v):= \frac{1}{2}\int_{\mathbb{R}} \frac{du}{ds}\frac{dv}{ds}ds,\quad u,v\in \mathcal{F}^{(s)},
        \end{aligned}
    \end{equation}
  where $u\ll s$ means $u$ is absolutely continuous with respect to $s$.
Then $(\mathcal{E}^{(s)},\mathcal{F}^{(s)})$ is a regular Dirichlet subspace of $(\mathcal{E,F})$ and $s$ is called the scaling function of $(\mathcal{E}^{(s)},\mathcal{F}^{(s)})$. The associated diffusion of $(\mathcal{E}^{(s)},\mathcal{F}^{(s)})$ is denoted by $X^{(s)}$. {Denote by $\mathcal{F}_\text{e}$ and $\mathcal{F}^{(s)}_\text{e}$} the extended Dirichlet spaces of $(\mathcal{E,F})$ and $(\mathcal{E}^{(s)},\mathcal{F}^{(s)})$ respectively. Note that $$\mathcal{F}_\text{e}=H^1_\text{e}(\mathbb{R}):=\{u: u\text{ is absolutely continuous on }\mathbb{R} \text{ and }u'\in L^2(\mathbb{R})\}.$$
{To the contrary}, if $(\mathcal{E}',\mathcal{F}')$ is a regular Dirichlet subspace of $(\mathcal{E,F})$, there always exists a strictly increasing and absolutely continuous function $s$ on $\mathbb{R}$ satisfying \eqref{EQSX} such that $(\mathcal{E}',\mathcal{F}')=(\mathcal{E}^{(s)},\mathcal{F}^{(s)})$.  To see this we refer the recurrent case to Theorem~2.1 of \cite{FMG}. Generally since $(\mathcal{E,F})$ is strongly local and irreducible, it follows from Theorem~4.6.4 of \cite{FU} and Theorem~1 of \cite{LY} that $(\mathcal{E}',\mathcal{F}')$ is also strongly local and irreducible. The irreducibility of $(\mathcal{E}',\mathcal{F}')$ implies that
\[
	P'_x(\sigma_y<\infty)>0,\quad x,y\in \mathbb{R},
\]
where $(P'_x)_{x\in\mathbb{R}}$ is the class of probability measures of associated diffusion $X'$ of  $(\mathcal{E}',\mathcal{F}')$ and $\sigma_y$ is the hitting time of $\{y\}$ relative to $X'$, see Theorem~4.7.1 of \cite{FU}. Then from \cite{XPJ} we can deduce that $X'$ can be characterized by a scaling function $s$ and symmetric measure $m$. In particular it corresponds to Dirichlet form \eqref{SBM}.

Fix a regular Dirichlet subspace $(\mathcal{E}^{(s)},\mathcal{F}^{(s)})$ of $(\mathcal{E,F})$ and its scaling function $s$. Let $$G:=\{x\in \mathbb{R}: s'(x)=1\}.$$ Then $G$ is defined in the sense of almost everywhere and it holds that
\begin{equation}\label{EQMGC}
	 m(G\cap (a,b))>0, \quad \forall (a,b)\subset \mathbb{R}.
	 \end{equation}
Note that the condition \eqref{EQMGC} of $G$ is equivalent to that $s$ is strictly increasing.
 In particular $(\mathcal{E}^{(s)},\mathcal{F}^{(s)})$ is a proper regular Dirichlet subspace of $(\frac{1}{2}\mathbf{D},H^1(\mathbb{R}))$ if and only if the Lebesgue measure of $F:= G^c$ is positive, i.e. $m(F)>0$.  On the other side if we have a subset $G$ of $\mathbb{R}$ such that \eqref{EQMGC} holds for any open interval $(a,b)$, then
\begin{equation}\label{EQSFX}
	ds:=1_G(x)dx
\end{equation}
defines a class of scaling functions satisfying \eqref{EQSX}, whereas they only differ up to a constant. In other words, the subset $G$ satisfying \eqref{EQMGC} is one-to-one corresponding to the scaling function $s$ with condition \eqref{EQSX} up to a constant.
Thus $G$ is an essential characteristic of  $(\mathcal{E}^{(s)},\mathcal{F}^{(s)})$.

In this paper we shall always make the following assumption on $G$:
\begin{description}
\item[(\textbf{H})] $G$ is an open set satisfying \eqref{EQMGC} and $m(F)>0$.
\end{description}
In fact the typical example of $F$ is a generalized Cantor set which is actually closed. Hence this assumption is very natural. But we still
want to point out that this assumption is not trivial. An example of a set $G$ satisfying \eqref{EQMGC} but having no open version can be constructed as follows.  Take a set $J\subset \mathbb{R}$ such that for any finite open interval $I$, it holds that
\[
	0<m(J\cap I)<m(I).
\]
We refer the existence of $J$ to \S1.5 of \cite{FGB}. But $J$ has no open a.e. version. To this end, assume that $G$ is an open a.e. version of $J$, i.e. $m(J\Delta G)=0$ and $G$ is open. Take a finite open interval $I\subset G$ and it follows that $m(I)=m(I\cap G)=m(I\cap J)<m(I)$ which conduces to a contradiction.

When $G$ satisfies (\textbf{H}), we may always assume without loss of generality that $F$ has no isolated points.
In fact 
let $$f(x):=\int_0^x 1_{F}(y)dy$$
and $$\tilde{G}:=\{x: \text{there exists}\ \delta>0\ \text{such that}\ f(x-\delta)=f(x+\delta)\}.$$
Then $\tilde {G}$ is an open a.e. version of $G$ and the complement of it has no isolated points.
In the sequel, we shall impose this assumption.

 Since $G$ is open, we can write
\begin{equation}\label{EQGCN}
	G=\bigcup_{n=1}^\infty I_n,
\end{equation}
where $\{I_n=(a_n,b_n):n\geq 1\}$ is a series of disjoint open intervals. Clearly at most two of them are infinite. Denote all finite endpoints of $\{I_n:n\geq 1\}$ by
\begin{equation}\label{EQHAB}
	H:=\{a_n,b_n:n\geq 1\}\setminus \{-\infty,\infty\}
\end{equation}
and let $d_n:=|b_n-a_n|$ for any $n\geq 1$. Note that $H\subset F$ and any point in $F\setminus H$ is a limitation of a subsequence of $H$. Clearly any two different intervals
 $I_n$ and $I_m$ can not share a common endpoint due to our assumption that $F$ has no isolated points.

The structure of this paper is as follows. In \S\ref{TBM} we shall first prove that before leaving $G$, the diffusion $X^{(s)}$ is equivalent to one-dimensional Brownian motion, see Lemma~\ref{THM1}. Then as stated in Theorem~\ref{THM5} we find that the trace Dirichlet form of $(\mathcal{E}^{(s)},\mathcal{F}^{(s)})$ on $F$ is a regular Dirichlet subspace of trace Dirichlet form of $(\mathcal{E,F})$ on $F$. Moreover the former Dirichlet form is a non-local Dirichlet form whereas the latter one is a mix-type Dirichlet form. Their common jumping measure $U$ is supported on countable points in $F\times F\setminus d$:
\[
	\{(a_n,b_n), (b_n,a_n):a_n>-\infty,b_n<\infty, n\geq 1\},
\]
where $a_n,b_n$ are endpoints of $I_n$ in \eqref{EQGCN}. In particular,
\[
	U((a_n,b_n))=U((b_n,a_n))=\frac{1}{2d_n}.
\]	
Thus we write the precise expressions of these two trace Dirichlet forms in Theorem~\ref{THM5}.

Since the smaller trace Dirichlet form only inherits the non-local part of bigger one, our concern in \S\ref{OCO} is whether and how we can describe the remaining information, i.e. the strongly local part, of trace Dirichlet form of one-dimensional Brownian motion on $F$. In order to do that, we first characterize the orthogonal complement of regular Dirichlet subspace. Although $\mathcal{F}_\text{e}$ is not a Hilbert space relative to the quadratic form $\mathcal{E}$, we can still define the orthogonal complement $\mathcal{G}^{(s)}$ of $\mathcal{F}^{(s)}_\text{e}$ in $\mathcal{F}_\text{e}$ relative to $\mathcal{E}$ in form, that is
$$\mathcal{G}^{(s)}:=\{u\in \mathcal{F}_\text{e}:\mathcal{E}(u,v)=0\text{ for any }v\in \mathcal{F}^{(s)}_\text{e}\}.$$
In Theorem~\ref{THM3} we shall describe the decomposition of any $u\in \mathcal{F}_\text{e}$ related to $\mathcal{F}^{(s)}_\text{e}$ and $\mathcal{G}^{(s)}$. In particular if $m(G)=\infty$, then $(\mathcal{E},\mathcal{G}^{(s)})$ is a Dirichlet space in wide sense, i.e. its satisfies all conditions of Dirichlet form except for the denseness in {$L^2(I)$}, see Lemma~\ref{LM6}. A Dirichlet form in wide sense is also called a D-space in the terminologies of \cite{FU2}. By a darning transform which regards each component $I_n$ of $G$ as a new point, $\mathcal{G}^{(s)}$ can actually be realized as a regular strongly local Dirichlet form and this regular Dirichlet form is a regular representation of $\mathcal{G}^{(s)}$ in the context of \cite{FU2}, see Theorem~\ref{prop5}. Similarly we can define the orthogonal complement of trace Dirichlet space, say \eqref{EQMGSUF}. This orthogonal complement exactly inherits the strongly local part of trace Brownian motion on $F$, which will be stated in Theorem~\ref{THM5}. Moreover a similar darning transform makes it be a regular strongly local Dirichlet form which is also equivalent to the darning transform of $\mathcal{G}^{(s)}$. Their associated diffusion is called the orthogonal darning process which is actually {a time-changed absorbing Brownian motion}, see {Theorems~\ref{prop5} and \ref{THM6}}.

\section{Traces of Brownian motion and their regular Dirichlet subspaces}\label{TBM}

We first prove a useful lemma.

\begin{lemma}\label{LM3}
	$\mathcal{F}^{(s)}=\{u\in H^1(\mathbb{R}): u'=0\text{ a.e. on }F\}$.
\end{lemma}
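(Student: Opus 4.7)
The plan is to unpack the definition of $\mathcal{F}^{(s)}$ given in \eqref{SBM} and use the identity $ds = 1_G(x)\,dx$ from \eqref{EQSFX} to translate between Stieltjes integration with respect to $s$ and ordinary Lebesgue integration restricted to $G$. Once that dictionary is in place, both inclusions reduce to a direct application of the fundamental theorem of calculus.

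For the inclusion $\subset$, suppose $u\in\mathcal{F}^{(s)}$, so $u\in L^2(\mathbf{R})$, $u\ll s$, and $g:=du/ds\in L^2(\mathbf{R},ds)$. Absolute continuity $u\ll s$ means $u$ admits the representation
\[
   u(x)-u(0)=\int_0^x g(y)\,ds(y)=\int_0^x g(y)1_G(y)\,dy.
\]
Hence $u$ is absolutely continuous on $\mathbf{R}$ in the ordinary sense with weak derivative $u'(x)=g(x)1_G(x)$. This immediately gives $u'=0$ a.e.\ on $F$, and
$\int_{\mathbf{R}}(u')^2\,dx=\int_{\mathbf{R}}g^2 1_G\,dx=\int_{\mathbf{R}}g^2\,ds<\infty$, so $u\in H^1(\mathbf{R})$.

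For the reverse inclusion $\supset$, suppose $u\in H^1(\mathbf{R})$ with $u'=0$ a.e.\ on $F$. Then $u$ has an absolutely continuous representative and for any $x$,
\[
   u(x)-u(0)=\int_0^x u'(y)\,dy=\int_0^x u'(y)1_G(y)\,dy=\int_0^x u'(y)\,ds(y),
\]
using $u'=u'\cdot 1_G$ a.e. This exhibits $u$ as an indefinite $s$-integral, so $u\ll s$ with $du/ds=u'$ holding $ds$-a.e.\ (the values on $F$ are irrelevant since $ds(F)=0$). The $L^2(\mathbf{R},ds)$-integrability of $du/ds$ follows from $\int(du/ds)^2\,ds=\int_G(u')^2\,dx\le\int_{\mathbf{R}}(u')^2\,dx<\infty$, and $u\in L^2(\mathbf{R})$ by hypothesis. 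Therefore $u\in\mathcal{F}^{(s)}$.

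There is no real obstacle: the lemma is essentially a book-keeping identity between the two descriptions of $\mathcal{F}^{(s)}$. The only point that deserves a line of care is the justification of $u'(x)=(du/ds)(x)\cdot 1_G(x)$ a.e., which follows from differentiating the Stieltjes representation and the fact that $s$ is absolutely continuous with $s'=1_G$ a.e.
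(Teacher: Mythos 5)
Your proof is correct and follows essentially the same route as the paper: both directions are handled by translating between $ds=1_G(x)\,dx$ and Lebesgue integration via the fundamental theorem of calculus, and your reverse inclusion is line-for-line the paper's argument. The only cosmetic difference is that in the forward direction the paper writes $u=\phi\circ s$ and applies the chain rule, while you integrate $du/ds$ directly against $ds$; these are equivalent, and you in fact spell out the $L^2$ verification of $u'$ that the paper leaves implicit.
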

\begin{proof}
	For any $u\in \mathcal{F}^{(s)}$, there exists an absolutely {continuous} function $\phi$ such that $u(x)=\phi(s(x))$. Then
	\[
		u'(x)=\phi'(s(x))\cdot s'(x)=\phi'(s(x))1_G(x)
	\]
and hence $u'=0$ a.e. on $F$. {To the contrary}, let $u\in H^1(\mathbb{R})$ and $u'=0$ a.e. on $F$. Then
\[
	u(x)-u(0)=\int_0^x u'(y)dy=\int_0^xu'(y)1_G(y)dy=\int_0^xu'(y)ds(y).
\]
Thus $u$ is absolutely continuous with respect to $s$ and $du/ds=u'(x)$, $ds$-a.e. It follows from $u'\in L^2(\mathbb{R})\subset L^2(\mathbb{R},ds)$ that $u\in \mathcal{F}^{(s)}$.  
\end{proof}

From the above lemma we can deduce a simple but very interesting property of regular Dirichlet subspace $(\mathcal{E}^{(s)},\mathcal{F}^{(s)})$. We first give some notes about the part Dirichlet forms. The part Dirichlet form of $(\mathcal{E}^{(s)},\mathcal{F}^{(s)})$ on $G$, denoted by $(\mathcal{E}^{(s)}_G,\mathcal{F}^{(s)}_G)$, is defined by
\[
	\begin{aligned}
		&\mathcal{F}^{(s)}_G:=\{u\in \mathcal{F}^{(s)}: u(x)=0,\; x\in F\},\\
		&\mathcal{E}^{(s)}_G(u,v):=\mathcal{E}^{(s)}(u,v),\quad u,v\in \mathcal{F}^{(s)}_G.
	\end{aligned}
\]
It is regular on $L^2(G)$ and corresponds to the Markov process $(X^{(s)}_t)_{t<\tau^{(s)}_{G}}$ on $G$ with the life time $\tau^{(s)}_G$,  where $\tau^{(s)}_G$ is the first exist time of $G$ relative to $X^{(s)}$. Similarly we can write $(\mathcal{E}_G,\mathcal{F}_G)$ or $(\frac{1}{2}\mathbf{D}_G,H^1_0(G))$ for the part Dirichlet form of $(\mathcal{E,F})$ on $G$. The following lemma indicates that before leaving $G$, the process $X^{(s)}$ is equivalent to  one-dimensional Brownian motion.

\begin{lemma}\label{THM1}
	It holds that $(\mathcal{E}^{(s)}_G,\mathcal{F}^{(s)}_G)= (\frac{1}{2}\mathbf{D}_G,H^1_0(G))$.
\end{lemma}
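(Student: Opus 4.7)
The plan is to identify both Dirichlet spaces as explicit subspaces of $H^1(\mathbf{R})$ cut out by pointwise/almost-everywhere vanishing conditions on $F$, and then verify the forms coincide.

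First I would unpack the two sides. By Lemma~\ref{LM3} combined with the definition of the part form,
\[
\mathcal{F}^{(s)}_G=\{u\in H^1(\mathbf{R}):u=0 \text{ on } F,\ u'=0\text{ a.e.\ on }F\},
\]
whereas, since in dimension one every $u\in H^1(\mathbf{R})$ has a continuous (indeed absolutely continuous) representative and $F=G^c$ is closed, the standard description of the Dirichlet form of absorbed Brownian motion on $G$ gives
\[
H^1_0(G)=\{u\in H^1(\mathbf{R}):u=0 \text{ on } F\}.
\]
The inclusion $\mathcal{F}^{(s)}_G\subset H^1_0(G)$ is then immediate.

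The key step is the reverse inclusion, i.e.\ showing that for any $u\in H^1(\mathbf{R})$ vanishing on $F$ one automatically has $u'=0$ a.e.\ on $F$. I would argue this by a density-point argument: since $m(F)>0$, Lebesgue's density theorem says almost every $x\in F$ is a point of density one of $F$. At such a density point $x$, $u$ is absolutely continuous and vanishes on a set whose density at $x$ is $1$; a standard lemma from real analysis (or a short direct estimate using $u(x+h)-u(x)=\int_x^{x+h}u'(y)dy$ together with a sequence $h_n\to 0$ such that $x+h_n\in F$) then forces $u'(x)=0$ wherever $u'(x)$ exists. Since $u'$ exists a.e., we conclude $u'=0$ a.e.\ on $F$.

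Finally, I would check the forms agree on this common domain. For $u,v\in \mathcal{F}^{(s)}_G$, the computation in the proof of Lemma~\ref{LM3} already shows $du/ds=u'$ $ds$-a.e.\ on $G$, and $ds=\mathbf{1}_G\,dx$, so
\[
\mathcal{E}^{(s)}(u,v)=\tfrac{1}{2}\int_\mathbf{R}\frac{du}{ds}\frac{dv}{ds}\,ds=\tfrac{1}{2}\int_G u'(x)v'(x)\,dx=\tfrac{1}{2}\int_\mathbf{R}u'v'\,dx,
\]
the last equality holding because $u'v'=0$ a.e.\ on $F$. This is precisely $\tfrac12\mathbf{D}_G(u,v)$, so the forms coincide and the proof is complete. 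The only substantive step is the density-point argument in the middle; everything else is bookkeeping from Lemma~\ref{LM3}.
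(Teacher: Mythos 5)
Your proof is correct and follows essentially the same route as the paper: both reduce the lemma to showing that $u\in H^1_0(G)$ forces $u'=0$ a.e.\ on $F$, establish this by evaluating difference quotients along a sequence of points of $F$ converging to $x$ (the paper invokes the standing assumption that $F$ has no isolated points where you invoke Lebesgue density points, but the mechanism is identical), and then conclude via Lemma~\ref{LM3}. The remaining verifications about the forms agreeing are the same bookkeeping in both versions.
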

\begin{proof}
	Clearly $\mathcal{F}^{(s)}_G\subset H^1_0(G)$ and for any $u,v\in \mathcal{F}^{(s)}_G$,  $\mathcal{E}_G(u,v)=\mathcal{E}^{(s)}_G(u,v)$. Thus it suffices to prove that $\mathcal{F}^{(s)}_G=H^1_0(G)$. Fix $u\in H^1_0(G)$. Since $u$ is absolutely continuous, it is a.e. differentiable. Thus for a.e. $x\in F$ at where $u$ is differentiable, take a sequence $\{x_n:n\geq 1\}\subset F$ which is convergent to $x$ as $n\rightarrow \infty$. Note that $u=0$ on $F$. Then we have
	\[
		u'(x)=\lim_{n\rightarrow \infty}\frac{u(x_n)-u(x)}{x_n-x}=0.
	\]
Hence it follows from Lemma~\ref{LM3} that $u\in \mathcal{F}^{(s)}$ whereas $u=0$ on $F$. Therefore $u\in \mathcal{F}^{(s)}_G$.  
\end{proof}

Recall that the scaling function $s$ of $X^{(s)}$ satisfies that $s'=1$ a.e. on $G$. That means $X^{(s)}$ has the same scale (up to a constant) as one-dimensional Brownian motion on $I_n$ for any $n\geq 1$, where $\cup_{n\geq 1}I_n=G$.  From this aspect we can see that the above theorem is natural and reasonable.

Set
\[
	\mathcal{F}_{\text{e},G}=H^1_\text{e}(G):=\{u\in \mathcal{F}_\mathrm{e}:u=0\text{ on }F\}
\]
and
\[
	\mathcal{F}^{(s)}_{\text{e},G}:=\{u\in \mathcal{F}^{(s)}_\text{e}:u=0\text{ on }F\}.
\]
Note that if $s(-\infty)>\infty$ (resp. $s(\infty)<\infty$) then $F$ is not bounded below (resp. above), in other words, there exists a sequence $\{x_n\}\subset F$ such that $x_n\rightarrow -\infty$ (resp. $x_n\rightarrow \infty$). Hence if $u\in \mathcal{F}_\text{e}$ such that $u=0$ on $F$, it follows that $\lim_{x\rightarrow -\infty}u(x)=0$ (resp. $\lim_{x\rightarrow \infty}u(x)=0$). As a consequence we have the following result.

\begin{lemma}\label{COR3}
	It holds that $\mathcal{F}_{\text{e},G}=\mathcal{F}^{(s)}_{\text{e},G}$.
\end{lemma}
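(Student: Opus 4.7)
The inclusion $\mathcal{F}^{(s)}_{\text{e},G} \subset \mathcal{F}_{\text{e},G}$ is immediate once one knows $\mathcal{F}^{(s)}_\text{e} \subset \mathcal{F}_\text{e}$: indeed, $u \in \mathcal{F}^{(s)}_\text{e}$ with $du/ds \in L^2(\mathbf{R},ds)$ means $u$ is absolutely continuous with $u' = (du/ds)\cdot 1_G$ a.e., so $\int_\mathbf{R}(u')^2 dx = \int_\mathbf{R}(du/ds)^2 ds < \infty$, i.e.\ $u \in \mathcal{F}_\text{e}$. Hence the plan is to prove the reverse inclusion $\mathcal{F}_{\text{e},G}\subset \mathcal{F}^{(s)}_{\text{e},G}$.

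Fix $u \in \mathcal{F}_{\text{e},G}$, so that $u$ is absolutely continuous on $\mathbf{R}$, $u' \in L^2(\mathbf{R})$ and $u = 0$ on $F$. The first step is to show that $u' = 0$ a.e.\ on $F$. This is the exact argument used in Lemma~\ref{THM1}: at any point $x \in F$ where $u$ is differentiable, use the standing assumption that $F$ has no isolated points to pick a sequence $\{x_n\} \subset F$ with $x_n \to x$, and compute
\[
    u'(x) = \lim_{n\to\infty}\frac{u(x_n) - u(x)}{x_n - x} = 0
\]
since $u$ vanishes on $F$. As $u$ is differentiable a.e., this gives $u' = 0$ a.e.\ on $F$.

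The second step is to verify $u \in \mathcal{F}^{(s)}_\text{e}$ by repeating the computation from Lemma~\ref{LM3}: for any $x \in \mathbf{R}$,
\[
    u(x) - u(0) = \int_0^x u'(y)\,dy = \int_0^x u'(y)\,1_G(y)\,dy = \int_0^x u'(y)\,ds(y),
\]
where the middle equality uses $u' = 0$ a.e.\ on $F$ and the last uses $ds = 1_G\,dx$. Thus $u \ll s$ with $du/ds = u'$ in the $ds$-a.e.\ sense, and $\int_\mathbf{R}(du/ds)^2\,ds = \int_G(u')^2\,dx \le \|u'\|_{L^2(\mathbf{R})}^2 < \infty$. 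Combined with $u = 0$ on $F$, this gives $u \in \mathcal{F}^{(s)}_{\text{e},G}$.

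There is essentially no hard obstacle here; the statement is a direct transcription, to the extended-space level, of the reasoning already used for Lemma~\ref{LM3} and Lemma~\ref{THM1}. The only point requiring a moment of care is the very first step, where one must invoke the standing hypothesis that $F$ has no isolated points in order to produce the approximating sequence $\{x_n\}\subset F$; without this, the a.e.\ vanishing of $u'$ on $F$ (and hence the whole equivalence) would fail in general.
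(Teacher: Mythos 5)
Your two inclusions are set up correctly and the computations you carry out are fine, but there is a genuine gap in the last step of the reverse inclusion: you conclude $u\in\mathcal{F}^{(s)}_\text{e}$ from ``$u\ll s$ and $\int_\mathbf{R}(du/ds)^2\,ds<\infty$'' alone. That characterizes the \emph{extended} space $\mathcal{F}^{(s)}_\text{e}$ only when $(\mathcal{E}^{(s)},\mathcal{F}^{(s)})$ is recurrent. In the transient case the extended Dirichlet space carries additional boundary conditions (see the description from Example~3.5.7 of \cite{CM} quoted in \S\ref{OCO}, and Lemma~\ref{LM5}): $u(\infty)=0$ if $s(\infty)<\infty$ and $u(-\infty)=0$ if $s(-\infty)>-\infty$. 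Without these, your criterion would admit, for instance, nonzero constants into $\mathcal{F}^{(s)}_\text{e}$ whenever $s(\mathbf{R})$ is bounded, which is false for a transient form. So as written your argument only shows $\mathcal{F}_{\text{e},G}\subset\{u:\ u\ll s,\ \int_\mathbf{R}(du/ds)^2\,ds<\infty,\ u|_F=0\}$, a set that is a priori larger than $\mathcal{F}^{(s)}_{\text{e},G}$.

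The missing piece is precisely what the paper disposes of in the sentence immediately preceding the lemma, and it is easily patched. If $s(-\infty)>-\infty$ then $m(G\cap(-\infty,0))=s(0)-s(-\infty)<\infty$, so $m(F\cap(-\infty,0))=\infty$ and in particular $F$ is unbounded below; pick $x_n\in F$ with $x_n\to-\infty$. Since you have already shown $u'=0$ a.e.\ on $F$, for $y<x<0$ one has $|u(x)-u(y)|\le\int_y^x|u'|1_G\,dt\le\|u'\|_{L^2(\mathbf{R})}\,(s(x)-s(y))^{1/2}$, which tends to $0$ as $x,y\to-\infty$; hence $\lim_{x\to-\infty}u(x)$ exists and equals $\lim_{n}u(x_n)=0$ because $u$ vanishes on $F$. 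The same argument at $+\infty$ handles the case $s(\infty)<\infty$. With these boundary conditions verified, your conclusion $u\in\mathcal{F}^{(s)}_{\text{e},G}$ is correct, and the proof then coincides with the one the paper intends.
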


Set further
\[
	\mathcal{H}_F:=\{u\in \mathcal{F}_\text{e}: \mathcal{E}(u,w)=0 \text{ for any }w\in \mathcal{F}_{\text{e}, G}\}
\]
and
\[
	\mathcal{H}^{(s)}_F:=\{u\in \mathcal{F}^{(s)}_\text{e}: \mathcal{E}^{(s)}(u,w)=0 \text{ for any }w\in \mathcal{F}^{(s)}_{\text{e},G}\}.
\]
Then every $u\in \mathcal{F}_\text{e}$ can be expressed uniquely as (see Exercise~4.6.4 of  \cite{FU})
\[
	u=u_1+u_2,\quad u_1\in \mathcal{F}_{\text{e},G},u_2\in \mathcal{H}_F.
\]
We denote the $\mathcal{H}_F$-part $u_2$ of $u$ by $H_Fu$. Similarly every $v\in \mathcal{F}^{(s)}_\text{e}$ can be expressed uniquely as
\begin{equation}\label{EQVVV}
	v=v_1+v_2,\quad v_1\in \mathcal{F}^{(s)}_{\text{e},G},v_2\in \mathcal{H}^{(s)}_F.
\end{equation}
Denote the $\mathcal{H}^{(s)}_F$-part $v_2$ of $v$ by $H^{(s)}_Fv$. Note that if $(\mathcal{E}^{(s)},\mathcal{F}^{(s)})$ is transient, then $\mathcal{H}^{(s)}_F$ is the orthogonal complement of $\mathcal{F}^{(s)}_{\text{e},G}$ with respect to the inner product $\mathcal{E}^{(s)}$, i.e.
\[
	\mathcal{F}^{(s)}_\text{e}=\mathcal{F}^{(s)}_{\text{e},G}\oplus_{\mathcal{E}^{(s)}} \mathcal{H}^{(s)}_F.
\]

We now turn to trace Dirichlet forms.
 Let $X=(X_t:t\geq 0)$ be the one-dimensional Brownian motion on $\mathbb{R}$ corresponding to $(\mathcal{E,F})$. As stated in Lemma~\ref{THM1} the part Dirichlet form of $(\mathcal{E}^{(s)},\mathcal{F}^{(s)})$ on $G$ is the same as the part of $(\mathcal{E,F})$ on $G$. That means that before leaving $G$, $X^{(s)}$ is equivalent to $X$. Since $(\mathcal{E}^{(s)},\mathcal{F}^{(s)})$ is a proper regular Dirichlet subspace of $(\mathcal{E,F})$, we guess that their trace Dirichlet forms on the boundary $F$ may inherit the inclusion relation between $(\mathcal{E,F})$ and $(\mathcal{E}^{(s)},\mathcal{F}^{(s)})$.

Let $\mu$ be a Radon (smooth) measure on $F$. A set $K$ is called the \emph{support} of $\mu$ if $K$ is the smallest closed set outside of which $\mu$ vanishes. We refer the definition of the \emph{quasi-support} of $\mu$ (relative to $(\mathcal{E,F})$ or $(\mathcal{E}^{(s)},\mathcal{F}^{(s)})$) to  \cite{CM}. Note that an $\mathcal{E}^{(s)}$-quasi-continuous function is always $\mathcal{E}$-quasi-continuous, hence it is continuous. It follows that an ($\mathcal{E}^{(s)}$ or $\mathcal{E}$)-quasi-closed set is always closed. Hence we know that the support of $\mu$ is also the quasi-support of $\mu$. In this section we always assume that the support of $\mu$ is $F$. The following lemma indicates that $1_F(x)dx$ is an example of such a measure $\mu$ on $\mathbb{R}$.

\begin{lemma}
Assume that $\mu(dx)=1_F(x)dx$. Then $\mu$ is a Radon smooth measure with respect to $X$ and $X^{(s)}$. Moreover the support and quasi-support of $\mu$ are both $F$.
\end{lemma}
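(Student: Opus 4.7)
My plan is to verify the three assertions---Radon, smooth, and the identification of both supports---one by one. Each reduces to a routine observation once one combines the construction of $\tilde G$ carried out just before assumption (\textbf{H}) with the standard potential theory of one-dimensional diffusions; I do not expect any genuine obstacle.

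\emph{Radon property and nest.} For any compact $K\subset\mathbf{R}$ I have $\mu(K)=m(K\cap F)\le m(K)<\infty$, so $\mu$ is Radon. The exhaustion $K_n:=[-n,n]$ then supplies an increasing sequence of compact sets with $\mu(K_n)<\infty$, which also serves as the nest required by the definition of a smooth measure.

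\emph{Smoothness with respect to both $X$ and $X^{(s)}$.} A positive Radon measure is smooth provided it charges no set of zero capacity. For $X$ every singleton is non-polar, since $1$-dim Brownian motion hits every point and hence $\mathrm{Cap}_1(\{y\})>0$; by monotonicity of capacity no non-empty Borel set is $\mathcal{E}$-polar, whence $\mu$ is smooth with respect to $X$. For $X^{(s)}$ the same reasoning applies: the irreducibility recalled in the introduction yields $P^{(s)}_x(\sigma_y<\infty)>0$ for all $x,y\in\mathbf{R}$, so singletons are non-polar for $X^{(s)}$ as well.

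\emph{Support and quasi-support equal $F$.} Since $F$ is closed and $\mu(F^c)=0$, I have $\operatorname{supp}(\mu)\subset F$ at once. For the reverse inclusion I would use the standing assumption $G=\tilde G$ imposed just before the lemma: by the very definition of $\tilde G$,
\[
F = \bigl\{x\in\mathbf{R}: m\bigl(F\cap(x-\delta,x+\delta)\bigr)>0\text{ for every }\delta>0\bigr\},
\]
so every $x\in F$ has positive $\mu$-mass in every neighbourhood, whence $x\in\operatorname{supp}(\mu)$. For the quasi-support I would invoke the observation made in the paragraph preceding the lemma that every $\mathcal{E}$- or $\mathcal{E}^{(s)}$-quasi-continuous function is continuous; this forces quasi-closed sets to coincide with closed sets, so the quasi-support coincides with the topological support and is therefore equal to $F$. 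The only step that genuinely uses information beyond standard $1$-dim potential theory is this support identification, which rests on the reduction from $G$ to $\tilde G$ that turns $F$ into the set of its own ``points of Lebesgue density'' in the weak sense above.
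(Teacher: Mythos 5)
Your proof is correct and takes essentially the same route as the paper's: Radonness is immediate, smoothness follows because the only polar set for $X$ and $X^{(s)}$ is the empty set, and the identification of support and quasi-support rests on the fact that $m\bigl(F\cap(x-\delta,x+\delta)\bigr)>0$ for every $x\in F$ (after the reduction to $\tilde G$) together with the observation that quasi-closed sets are closed. The only cosmetic difference is that you establish $F\subset\operatorname{supp}(\mu)$ directly, whereas the paper argues by contradiction from a point $x\in F\setminus K$.
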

\begin{proof}
Clearly $\mu$ is Radon.	Since the $m$-polar set of $X$ and $X^{(s)}$ must be empty set, it follows that $\mu$ is  smooth with respect to $X$ and $X^{(s)}$. Let $K$ be the support of $\mu$. Then $K\subset F$. If $K\neq F$, take $x\in F\setminus K$. Since $K$ is closed, we have
	\[
		d(x,K)=\inf_{y\in K}|x-y|>0.
	\]
Fix a constant $\epsilon<d(x,K)/2$. Let $H_\epsilon:=F\cap (x-\epsilon,x+\epsilon)$. Clearly $H_\epsilon\subset F\setminus K$ and $m(H_\epsilon)>0$. Thus $\mu(K^c)=m(F\setminus K)>m(H_\epsilon)>0$ which conduces to a contradiction. Therefore $K=F$. 
\end{proof}

 Denote the time-changed processes of $X$ and $X^{(s)}$ with respect to $\mu$ by $\check{X}$ and $\check{X}^{(s)}$ respectively. Then $\check{X}$ and $\check{X}^{(s)}$ are both $\mu$-symmetric on $F$ and their corresponding Dirichlet forms are both regular on $L^2(F,\mu)$. Denote these two associated Dirichlet forms, i.e. the traces of $(\mathcal{E,F})$ and $(\mathcal{E}^{(s)},\mathcal{F}^{(s)})$ on $F$, by $(\check{\mathcal{E}},\check{\mathcal{F}})$ and $(\check{\mathcal{E}}^{(s)},\check{\mathcal{F}}^{(s)})$, respectively. 
Precisely, let $\sigma_F$ and $\sigma^{(s)}_F$ be the hitting time of $F$ relative to $K$ and $X^{(s)}$ and in fact we have
\[
	H_Fu(x)=E^xu(X_{\sigma_F}),\quad x\in \mathbb{R}
\]
for any $u\in \mathcal{F}_\text{e}$ and
\[
	H^{(s)}_Fu(x)=E^x u(X^{(s)}_{\sigma^{(s)}_F}),\quad x\in \mathbb{R}
\]
for any $u\in \mathcal{F}^{(s)}_\text{e}$. Then
\[
\begin{aligned}
	&\check{\mathcal{F}}=\{\varphi\in L^2(F,\mu): \varphi=u\; \mu\text{-a.e. on }F\text{ for some }u\in \mathcal{F}_\mathrm{e}\},	\\
	&\check{\mathcal{E}}(\varphi,\varphi)=\mathcal{E}(H_Fu,H_Fu),\quad \varphi\in \check{\mathcal{F}},\varphi=u\; \mu\text{-a.e. on }F,u\in \mathcal{F}_\mathrm{e}.
\end{aligned}
\]
and
\[
\begin{aligned}
	&\check{\mathcal{F}}^{(s)}=\{\varphi\in L^2(F,\mu): \varphi=u\; \mu\text{-a.e. on }F\text{ for some }u\in \mathcal{F}^{(s)}_\text{e}\},	\\
	&\check{\mathcal{E}}^{(s)}(\varphi,\varphi)=\mathcal{E}^{(s)}(H^{(s)}_Fu,H^{(s)}_Fu),\quad \varphi\in \check{\mathcal{F}}^{(s)},\varphi=u\; \mu\text{-a.e. on }F,u\in \mathcal{F}^{(s)}_\text{e}.
\end{aligned}
\]
Note that since $(\mathcal{E}^{(s)},\mathcal{F}^{(s)})$ is a regular Dirichlet subspace of $(\mathcal{E,F})$, it follows from Lemma~2 of  \cite{LY} that $\mathcal{F}^{(s)}_\text{e}$ is a proper subset of $\mathcal{F}_\mathrm{e}$. Thus $\check{\mathcal{F}}^{(s)}\subset \check{\mathcal{F}}$.   We shall prove later that $(\check{\mathcal{E}}^{(s)},\check{\mathcal{F}}^{(s)})$ is actually a proper regular Dirichlet subspace of $(\check{\mathcal{E}},\check{\mathcal{F}})$ on $L^2(F,\mu)$.

Note that the global property (recurrent or transient) of $(\check{\mathcal{E}}^{(s)},\check{\mathcal{F}}^{(s)})$ (resp. $(\check{\mathcal{E}},\check{\mathcal{F}})$) is the same as that of $(\mathcal{E}^{(s)},\mathcal{F}^{(s)})$ (resp. $(\mathcal{E,F})$). In particular, $(\check{\mathcal{E}},\check{\mathcal{F}})$ is recurrent. On the other hand $(\check{\mathcal{E}},\check{\mathcal{F}})$ is irreducible. In fact, for any $\varphi=u|_F\in \check{\mathcal{F}}_\text{e}$ such that $\check{\mathcal{E}}(\varphi,\varphi)=0$ for some $u\in \mathcal{F}_\mathrm{e}$, it follows that $$\mathcal{E}(H_Fu,H_Fu)=0.$$ Thus $H_Fu\equiv C$ for some constant $C$ and $\varphi=u|_F=(H_Fu)|_F\equiv C$. From Theorem~5.2.16 of \cite{CM} we obtain that $(\check{\mathcal{E}},\check{\mathcal{F}})$ is irreducible.
 For $(\check{\mathcal{E}}^{(s)},\check{\mathcal{F}}^{(s)})$ we can also deduce similarly that every $\varphi\in \check{\mathcal{F}}^{(s)}_\text{e}$ with $\check{\mathcal{E}}^{(s)}(\varphi,\varphi)=0$ is also a constant function. Hence if $(\mathcal{E}^{(s)},\mathcal{F}^{(s)})$ is recurrent then $(\check{\mathcal{E}}^{(s)},\check{\mathcal{F}}^{(s)})$ is irreducible and recurrent. Moreover, the $\mu$-polar set with respect to $\check{X}$ or $\check{X}^{(s)}$ is only the empty set (see Theorem~5.2.8 of \cite{CM}).

Let us present the main result of this section, which tells that a Dirichlet form with non-trivial local part may have a regular Dirichlet subspace having no local part. Recall that $G$ can be written as \eqref{EQGCN} by assumption \textbf{(H)}.

\begin{theorem}\label{THM5}
Let $(\check{\mathcal{E}},\check{\mathcal{F}})$ and  $(\check{\mathcal{E}}^{(s)},\check{\mathcal{F}}^{(s)})$ be the traces of  $(\frac{1}{2}\mathbf{D},H^1(\mathbb{R}))$ and $(\mathcal{E}^{(s)},\mathcal{F}^{(s)})$ on $F$ relative to $\mu$ respectively. 
\begin{itemize}
\item[(1)] The Dirichlet form $(\check{\mathcal{E}}^{(s)},\check{\mathcal{F}}^{(s)})$ is a proper regular Dirichlet subspace of $(\check{\mathcal{E}},\check{\mathcal{F}})$ on $L^2(F,\mu)$.
\item[(2)] The Dirichlet form $(\check{\mathcal{E}},\check{\mathcal{F}})$ is a mixed-type Dirichlet form with the jumping part and for any $\varphi\in \check{\mathcal{F}}_\text{e}$,
\[
	\check{\mathcal{E}}(\varphi,\varphi)=\frac{1}{2}\int_F \varphi'(x)^2dx+\frac{1}{2}\sum_{n\geq 1}\frac{\left(\varphi(a_n)-\varphi(b_n)\right)^2}{|a_n-b_n|}.
	\]
	Its regular Dirichlet subspace $(\check{\mathcal{E}}^{(s)},\check{\mathcal{F}}^{(s)})$ is a non-local Dirichlet form whose jumping measure is the same as above and for any $\varphi\in \check{\mathcal{F}}^{(s)}_\text{e}$,
\[
	\check{\mathcal{E}}^{(s)}(\varphi,\varphi)=\check{\mathcal{E}}(\varphi,\varphi)=\frac{1}{2}\sum_{n\geq 1}\frac{\left(\varphi(a_n)-\varphi(b_n)\right)^2}{|a_n-b_n|}.
\]
\end{itemize}
\end{theorem}
\begin{proof}
\begin{itemize}
\item[(1)] 	Since $\mathcal{F}^{(s)}_\mathrm{e}$ is a subset of $\mathcal{F}_\mathrm{e}$, it follows that $\check{\mathcal{F}}^{(s)}$ is also a subset of $\check{\mathcal{F}}$. Note that it is a proper subset because
	\[
		f(x):=x,\quad x\in F
	\]
is locally in $\check{\mathcal{F}}$ but not locally in $\check{\mathcal{F}}^{(s)}$. Thus it suffices to prove that for any $u\in \mathcal{F}^{(s)}_\text{e}\subset\mathcal{F}_\text{e}$, it holds that $H_Fu=H^{(s)}_Fu$. In fact, $H^{(s)}_Fu$ is the unique function in $\mathcal{F}^{(s)}_\text{e}$ such that
\[
	\mathcal{E}(u,w)=0
\]
for any $w\in \mathcal{F}^{(s)}_{\text{e},G}$. It follows from Lemma~\ref{COR3} that $H^{(s)}_Fu$ is in $\mathcal{F}_\text{e}$ and
\[
	\mathcal{E}(u,w)=0
\]
for any $w\in \mathcal{F}_{\text{e},G}$. Thus $H^{(s)}_Fu=H_Fu$. 
\item[(2)] 
At first we assert that they both have no killing inside. In fact since $(\check{\mathcal{E}},\check{\mathcal{F}})$ is recurrent it is also conservative. Thus its life time $\check{\zeta}$ is always infinite. In particular $(\check{\mathcal{E}},\check{\mathcal{F}})$ has no killing inside. It follows that its regular Dirichlet subspace $(\check{\mathcal{E}}^{(s)},\check{\mathcal{F}}^{(s)})$ also has no killing inside.

 We refer the Feller measures of trace Dirichlet forms to \S5.5 of \cite{CM} and \cite{CFJ}. From Theorem~1 of  \cite{LY} we can deduce that $(\mathcal{E,F})$ and $(\mathcal{E}^{(s)},\mathcal{F}^{(s)})$  have the same Feller measures for $F$ because they are exactly the jumping measures of $(\check{\mathcal{E}},\check{\mathcal{F}})$ and $(\check{\mathcal{E}}^{(s)},\check{\mathcal{F}}^{(s)})$. Denote the common Feller measure on $F\times F$ by $U(dx dy)$. Then for any $\varphi\in \check{\mathcal{F}}_\text{e}$ (see (5.6.7) of \cite{CM}),
\[
	\check{\mathcal{E}}(\varphi,\varphi)=\frac{1}{2}\mu_{\langle H_F\varphi\rangle}(F)+\frac{1}{2}\int_{F\times F}(\varphi(x)-\varphi(y)))^2U(dxdy),
\]
where $\mu_{\langle H_F\varphi\rangle}$ is the energy measure of $(\mathcal{E,F})$ relative to $H_F\varphi$ and for any $\phi\in \check{\mathcal{F}}^{(s)}_\text{e}$,
\[
	\check{\mathcal{E}}^{(s)}(\phi,\phi)=\frac{1}{2}\mu^{(s)}_{\langle H^{(s)}_F\phi\rangle}(F)+\frac{1}{2}\int_{F\times F}(\phi(x)-\phi(y)))^2U(dxdy),
\]
where  $\mu^{(s)}_{\langle H^{(s)}_F\phi\rangle}$ is the energy measure of $(\mathcal{E}^{(s)},\mathcal{F}^{(s)})$ relative to $H^{(s)}_F\phi$. Note that the first terms in the right sides of  above two equations are the strongly local part of  corresponding Dirichlet forms.

We claim that for any $u\in \mathcal{F}_\text{e}$, the energy measure
	\begin{equation}\label{EQMLUR}
	\mu_{\langle u\rangle}=u'(x)^2dx
	\end{equation}
	and for any $v\in \mathcal{F}^{(s)}_\text{e}$, the energy measure
	\[
		\mu^{(s)}_{\langle v\rangle}=(\frac{dv}{ds})^2ds.
	\]
	In particular $\mu^{(s)}_{\langle v\rangle}(F)=0$ for any $v\in \mathcal{F}^{(s)}_\text{e}$.

In fact for any $f\in C_c^1(\mathbb{R})$ we have (see \S3.2 of  \cite{FU})
	\[
		\int_\mathbb{R}fd\mu_{\langle u\rangle}=2\mathcal{E}(uf,u)-\mathcal{E}(u^2,f)=\int_\mathbb{R}f(x)u'(x)^2dx.
	\]
	Thus $\mu_{\langle u\rangle}=u'(x)^2dx$. Similarly we can prove that $\mu^{(s)}_{\langle v\rangle}=(dv/ds)^2ds$. In particular it follows from \eqref{EQSFX} that for any $v\in \mathcal{F}^{(s)}_\text{e}$,
	\[
		\mu^{(s)}_{\langle v\rangle}(F)=\int_F (\frac{dv}{ds})^2ds=\int_F(\frac{dv}{ds})^21_G(x)dx=0.
	\]

Moreover fix $\varphi\in \check{\mathcal{F}}_\text{e}$. Since $H_F\varphi=\varphi$ on $F$, similar to the proof of Lemma~\ref{THM1} we have
\[
(H_F\varphi)'=\varphi',\quad \text{a.e. on }F.
\]
Then it follows from \eqref{EQMLUR} that 
\[
	\mu_{\langle H_F\varphi\rangle}(F)=\int_F \left((H_F\varphi)'\right)^2dx=\int_F \varphi'(x)^2dx.
	\]

Finally, we shall compute the Feller measure $U$. Recall that in \S\ref{INTRO} we set $G=\bigcup_{n\geq 1}I_n$, where $\{I_n=(a_n,b_n):n\geq 1\}$ is a series of disjoint open intervals without common endpoints. 
Fix two non-negative and bounded functions $\varphi$ and $\phi$ on $F$ such that $\varphi\cdot \phi\equiv 0$. We set $\varphi(-\infty)=\varphi(\infty)=\phi(-\infty)=\phi(\infty)=0$ for convenience. It follows from (5.5.13) and (5.5.14) of \cite{CM} that
\[
	U(\varphi \otimes \phi)=\uparrow \lim_{\alpha\uparrow \infty}\alpha(H^\alpha_F\varphi,H_F\phi)_{G}=\uparrow \lim_{\alpha\uparrow \infty}\sum_{n\geq 1}\alpha(H^\alpha_F\varphi,H_F\phi)_{I_n},
\]
where $H^\alpha_F\varphi(x):=E^x\left( \text{e}^{-\alpha \sigma_F}\varphi(X_{\sigma_F})\right)$ for any $x\in \mathbb{R}$. Fix a finite component $I_n$ of $G$ and $x\in I_n$. Since the trajectories of Brownian motion are continuous, it follows that
\[
	X_{\sigma_F}=a_n\text{ or }b_n,\quad P^x\text{-a.s.}
\]
Hence
\[
\begin{aligned}
	H_F\phi(x)&=\phi(a_n)\cdot P^x(X_{\sigma_F}=a_n)+ \phi(b_n)\cdot P^x(X_{\sigma_F}=b_n)\\
	 &=\phi(a_n)\cdot\frac{b_n-x}{b_n-a_n}+\phi(b_n)\cdot\frac{x-a_n}{b_n-a_n}
	\end{aligned}\]
and $$
\begin{aligned}
	H^\alpha_F\varphi(x)&=\varphi(a_n)\cdot E^x(\text{e}^{-\alpha \sigma_F},X_{\sigma_F}=a_n) \\
	  &\qquad \quad + \varphi(b_n)\cdot E^x(\text{e}^{-\alpha \sigma_F},X_{\sigma_F}=b_n).
	\end{aligned}$$
Otherwise if $I_n$ is infinite, i.e. $a_n=-\infty$ or $b_n=\infty$,  then $X_{\sigma_F}$ is located at the finite endpoint of $I_n$ $P^x$-a.s. for any $x\in I_n$. However $\varphi(a_n)\phi(a_n)=\varphi(b_n)\phi(b_n)=0$. It follows that $(H^\alpha_F\varphi,H_F\phi)_{I_n}=0$.
Set
\[
\begin{aligned}
r_n(x)&:=\frac{b_n-x}{b_n-a_n},\\
p_n(x)&:=E^x(e^{-\alpha \sigma_F};X_{\sigma_F}=a_n),\\
q_n(x)&:=E^x(e^{-\alpha \sigma_F};X_{\sigma_F}=b_n).\end{aligned}\]
Then we have
\[
\begin{aligned}
	U(\varphi \otimes \phi)=\uparrow \lim_{\alpha\uparrow \infty}&\sum_{n\geq 1} \alpha\bigg(\varphi(a_n)\phi(b_n)\int_{I_n}p_n(x)(1-r_n(x))dx \\
	 &+\varphi(b_n)\phi(a_n)\int_{I_n} q_n(x)r_n(x)dx\bigg).
\end{aligned}\]
and $U$ is supported on a set of $\mathbb{R}^2$ containing countable points $$\{(a_n,b_n),(b_n,a_n):a_n>-\infty, b_n<\infty, n\geq 1\}.$$
Let $\varphi=1_{a_n},\phi=1_{b_n}$, where $a_n>-\infty$ and $b_n<\infty$. Note that $$p_n(x)=\frac{\sinh \sqrt{2\alpha} (b_n-x)}{\sinh \sqrt{2\alpha} (b_n-a_n)}$$ and $$q_n(x)=\frac{\sinh \sqrt{2\alpha} (x-a_n)}{\sinh \sqrt{2\alpha} (b_n-a_n)},$$ see Problem~6 in \S1.7 of \cite{IM}. Then we obtain that
\[
	U((a_n,b_n))=\lim_{\alpha\uparrow \infty}\alpha\int_{a_n}^{b_n} p_n(x)(1-r_n(x))dx=\frac{1}{2d_n},
\]
where $d_n=|b_n-a_n|$. Clearly $$U((b_n,a_n))=U((a_n,b_n))=\frac{1}{2d_n}.$$ When $a_n=-\infty$ or $b_n=\infty$ we still denote
$
	U((a_n,b_n)):=\frac{1}{2d_n}=0.
$
That completes the proof.
\end{itemize}
\end{proof}

\begin{remark}
\begin{itemize}
\item[(1)] The open set $G$ in \eqref{EQSFX} is an essential  characteristic of regular Dirichlet subspace $(\mathcal{E}^{(s)},\mathcal{F}^{(s)})$ of $(\mathcal{E,F})$. As stated in Lemma~\ref{THM1}, before leaving $G$, $X^{(s)}$ is equivalent to $X$. The above theorem shows that the difference between $X^{(s)}$ and $X$ is located on their traces on the boundary $F$ of $G$. In fact the trace Dirichlet form $(\check{\mathcal{E}}^{(s)},\check{\mathcal{F}}^{(s)})$ is still a proper regular Dirichlet subspace of $(\check{\mathcal{E}},\check{\mathcal{F}})$.
\item[(2)] 	Denote the extended Dirichlet spaces of $(\check{\mathcal{E}}^{(s)},\check{\mathcal{F}}^{(s)})$ and $(\check{\mathcal{E}},\check{\mathcal{F}})$ by $\check{\mathcal{F}}^{(s)}_\text{e}$ and $\check{\mathcal{F}}_\text{e}$. Clearly
\begin{equation}\label{EQCMFS}
	\check{\mathcal{F}}^{(s)}_\text{e}=	\mathcal{F}^{(s)}_\text{e}\big|_F,\quad \check{\mathcal{F}}_\text{e}=\mathcal{F}_\text{e}\big|_F.
\end{equation}
Here for a class $\mathcal{C}$ of functions on $\mathbb{R}$,
\[
	\mathcal{C}|_F:=\{u|_F:u\in\mathcal{C}\},
\]
where $u|_F$ is the restriction of $u$ on $F$.
Note that the extended Dirichlet spaces are independent of the choice of $\mu$. More precisely, for any Radon measure $\mu$ on $\mathbb{R}$ with the support $F$, their extended Dirichlet spaces are always given by \eqref{EQCMFS}. Thus the results of Theorem~\ref{THM5}~(1) can essentially be expressed as
\[
	\check{\mathcal{F}}^{(s)}_\text{e}\subset \check{\mathcal{F}}_\text{e},\quad \check{\mathcal{E}}(u,v)=\check{\mathcal{E}}^{(s)}(u,v),\quad u,v\in \check{\mathcal{F}}^{(s)}_\text{e}.
\]
In particular $\check{\mathcal{F}}^{(s)}_\text{e}$ is a proper subset of $\check{\mathcal{F}}_\text{e}$.
\item[(3)] In Corollary~2 of  \cite{LY} we have proved that if $(\mathcal{E,F})$ is a L\'{e}vy type Dirichlet form whose strongly local part does not vanish, then neither does the strongly local part of any regular Dirichlet subspace of $(\mathcal{E,F})$. The above theorem implies that this fact is not always right.
\end{itemize}
\end{remark}

\section{Orthogonal complement and darning processes}\label{OCO}

As stated in Theorem~\ref{THM5} the regular Dirichlet subspace $(\check{\mathcal{E}}^{(s)},\check{\mathcal{F}}^{(s)})$ only contains the non-local information of $(\check{\mathcal{E}},\check{\mathcal{F}})$. An interesting question is whether (and how) the `orthogonal complement' of $(\check{\mathcal{E}}^{(s)},\check{\mathcal{F}}^{(s)})$ contains the remaining information, i.e. the strongly local part, of $(\check{\mathcal{E}},\check{\mathcal{F}})$.

\subsection{Orthogonal complement}

Note that $\check{\mathcal{F}}_\mathrm{e}$ (resp. $\check{\mathcal{F}}^{(s)}_\mathrm{e}$) is the restriction of $\mathcal{F}_\mathrm{e}$ (resp. $\mathcal{F}^{(s)}_\mathrm{e}$) on $F$, say \eqref{EQCMFS}. In order to determine the orthogonal complement of trace subspace,
we shall first consider the orthogonal complement of $\mathcal{F}^{(s)}_\text{e}$ in $\mathcal{F}_\text{e}$ relative to the quadratic form $\mathcal{E}(\cdot, \cdot)$.
 Although $\mathcal{F}_\text{e}$ is not a Hilbert space relative to the quadratic form $\mathcal{E}(\cdot, \cdot)$, we can still define the orthogonal complement of $\mathcal{F}^{(s)}_\text{e}$ in $\mathcal{F}_\text{e}$ formally by
	\begin{equation}\label{EQGSU}
	\mathcal{G}^{(s)}:=\{u\in \mathcal{F}_\text{e}:\mathcal{E}(u,v)=0\text{ for any }v\in \mathcal{F}^{(s)}_\text{e}\}.
\end{equation}
Before characterizing $\mathcal{G}^{(s)}$ we need to make some discussions on $\mathcal{F}^{(s)}_\text{e}$.
{Both $\infty$ and $-\infty$ are non-regular boundaries of $\mathbb{R}$ for the Dirichlet form $(\mathcal{E}^{(s)},\mathcal{F}^{(s)})$ on $L^2(\mathbb{R})$. Hence, by virtue of Theorem~2.2.11~(ii) of \cite{CM}, }
\[
\begin{aligned}
	\mathcal{F}^{(s)}_\text{e}=\{u:u&\ll s, \int_{\mathbb{R}}\left(\frac{du}{ds}\right)^2ds<\infty,\\ &u(\infty)=0\text{ if }s(\infty)<\infty\text{ and }u(-\infty)=0\text{ if }s(-\infty)>-\infty\}.
\end{aligned}\]
{We can make the following classification of the boundaries $\infty, -\infty$: }
\begin{description}
\item[Case I.] $s(-\infty)=-\infty, s(\infty)=\infty$,
\item[Case II.] $s(-\infty)>-\infty, s(\infty)=\infty$ or $s(-\infty)=-\infty, s(\infty)<\infty$,
\item[Case III.] $s(-\infty)>-\infty, s(\infty)<\infty$.
\end{description}
{Note that $(\mathcal{E}^{(s)}, \mathcal{F}^{(s)})$ is recurrent in Case I but transient in Case II and III. Note further that, for Case I and II, we have $m(G)=\infty$. }

We have the following lemma similar to Lemma~\ref{LM3} to characterize $\mathcal{F}^{(s)}_\text{e}$.

\begin{lemma}\label{LM5}
It holds that
\[
\begin{aligned}
	\mathcal{F}^{(s)}_\text{e}=\{u&\in \mathcal{F}_\text{e}:u'=0\text{ a.e. on }F,\\ &u(\infty)=0\text{ if }s(\infty)<\infty\text{ and }u(-\infty)=0\text{ if }s(-\infty)>-\infty\}.
	\end{aligned}
\]
\end{lemma}

Now we shall give a useful expression of $\mathcal{G}^{(s)}$ and an `orthogonal' decomposition of $\mathcal{F}_\mathrm{e}$ relative to $\mathcal{F}^{(s)}_\mathrm{e}$ and $\mathcal{G}^{(s)}$. The decomposition \eqref{EQUUU} indicates that the definition of $\mathcal{G}^{(s)}$ in \eqref{EQGSU} is reasonable.

\begin{theorem}\label{THM3}
	The class $\mathcal{G}^{(s)}$ has the following characterization
	\begin{equation}\label{EQGSH}
		\mathcal{G}^{(s)}=\big\{u\in \mathcal{F}_\text{e}:u'\text{ is a constant a.e. on }G\big\}.
	\end{equation}
In particular for Case I and II, equivalently $m(G)=\infty$, it holds that
	\begin{equation}\label{EQMGSU}
		\mathcal{G}^{(s)}=\{u\in \mathcal{F}_\text{e}:u'=0\text{ a.e. on }G\}.
	\end{equation}
Moreover, any  $u\in \mathcal{F}_\text{e}$ can be expressed  as
	\begin{equation}\label{EQUUU}
		u=u_1+u_2,\quad u_1\in \mathcal{F}^{(s)}_\text{e},u_2\in \mathcal{G}^{(s)}.
	\end{equation}
This decomposition is unique if $(\mathcal{E}^{(s)},\mathcal{F}^{(s)})$ is transient and unique up to a constant if $(\mathcal{E}^{(s)},\mathcal{F}^{(s)})$ is recurrent.
\end{theorem}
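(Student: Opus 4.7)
The plan is to identify $\mathcal{G}^{(s)}$ by exploiting the formula $\mathcal{E}(u,v)=\frac{1}{2}\int_{\mathbf{R}}u'(x)v'(x)dx$ together with Lemma~\ref{LM5}, which tells us that every $v\in\mathcal{F}^{(s)}_\text{e}$ satisfies $v'=0$ a.e.\ on $F$. Thus for $u\in\mathcal{F}_\text{e}$,
\[
\mathcal{E}(u,v)=\frac{1}{2}\int_G u'(x)v'(x)dx,\qquad v\in\mathcal{F}^{(s)}_\text{e},
\]
and membership of $u$ in $\mathcal{G}^{(s)}$ reduces to an orthogonality condition on $u'|_G$ in $L^2(G)$ against the range $\mathcal{R}:=\{v'|_G:v\in\mathcal{F}^{(s)}_\text{e}\}$.

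First I would determine $\mathcal{R}$. Given $f\in L^2(G)$, extend by zero to $\tilde f\in L^2(\mathbf{R})$ and set $v(x):=\int_{x_0}^x\tilde f(y)dy$ with $x_0\in\{-\infty,0,+\infty\}$ chosen according to the boundary conditions in Lemma~\ref{LM5}. In Cases~I and II the boundary requirement is vacuous or singly constrained and can always be met; the finiteness of $m(G\cap(-\infty,0])$ or $m(G\cap[0,\infty))$ combined with Cauchy--Schwarz guarantees the defining integral converges. This yields $\mathcal{R}=L^2(G)$, so the orthogonality forces $u'|_G=0$ a.e., giving \eqref{EQMGSU}. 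In Case~III both limit conditions must hold, imposing the single linear constraint $\int_G v'\,dy=0$, so $\mathcal{R}=\{f\in L^2(G):\int_G f\,dy=0\}$. Its orthogonal complement in $L^2(G)$ is the one-dimensional space of constants, which genuinely lies in $L^2(G)$ since $m(G)<\infty$, yielding $u'|_G$ constant a.e. and hence \eqref{EQGSH} in full generality (the Cases~I and II formula being the special instance in which the only constant in $L^2(G)$ is $0$).

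For the decomposition \eqref{EQUUU}, given $u\in\mathcal{F}_\text{e}$ I would set $c:=0$ in Cases~I and II and $c:=m(G)^{-1}\int_G u'\,dy$ in Case~III, then define
\[
u_1(x):=\int_{x_0}^x\bigl(u'(y)-c\bigr)1_G(y)\,dy
\]
with $x_0$ chosen so the appropriate limit condition at $\pm\infty$ is enforced (any $x_0$ works in Case~I). A direct check using Lemma~\ref{LM5} shows $u_1\in\mathcal{F}^{(s)}_\text{e}$---the constant $c$ in Case~III is designed precisely so that $u_1(\infty)=0$---while $u_2:=u-u_1$ has derivative $u_2'=c\cdot 1_G+u'\cdot 1_F$, which is constant a.e. on $G$ and lies in $L^2(\mathbf{R})$ (using $m(G)<\infty$ for the Case~III piece), so $u_2\in\mathcal{G}^{(s)}$ by the characterization just proved.

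Finally, uniqueness reduces to computing $\mathcal{F}^{(s)}_\text{e}\cap\mathcal{G}^{(s)}$: functions with $u'=0$ on $F$ and $u'$ constant on $G$, subject to the boundary conditions from Lemma~\ref{LM5}. In Case~I (recurrent) this intersection is precisely the constants, which gives uniqueness up to a constant. In Cases~II and III (transient) the boundary condition at the finite $s$-endpoint forces the candidate constant to be $0$, so the decomposition is unique. The main non-routine step is Case~III, where the naive split $u'=u'1_G+u'1_F$ fails to produce an element of $\mathcal{F}^{(s)}_\text{e}$ because $u_1(\infty)$ need not vanish; the compensating constant $c$, available only because $m(G)<\infty$, is the key device that makes both the characterization and the decomposition work.
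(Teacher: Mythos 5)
Your argument is correct, and for the characterization \eqref{EQGSH} it takes a genuinely different route from the paper, while the decomposition and uniqueness parts essentially coincide with the paper's. For the characterization, the paper argues via the subclass $C_c^1\circ s$ of $\mathcal{F}^{(s)}_\text{e}$: in the forward direction it uses $\mathcal{E}$-density of $C_c^1\circ s$ in $\mathcal{F}^{(s)}_\text{e}$ (in the transient case) to check orthogonality, and in the converse direction it changes variables by $t=s^{-1}$ and invokes the fundamental lemma of the calculus of variations to conclude that $u'\circ t$ is a.e.\ constant. You instead identify the range $\mathcal{R}=\{v'|_G:v\in \mathcal{F}^{(s)}_\text{e}\}$ explicitly --- all of $L^2(G)$ in Cases I--II, the hyperplane $\{f:\int_G f\,dy=0\}$ in Case III, via the primitive $v(x)=\int_{x_0}^x \tilde f(y)\,dy$ with $x_0$ adapted to the boundary conditions of Lemma~\ref{LM5} --- and then read off $\mathcal{G}^{(s)}$ as the preimage of $\mathcal{R}^\perp$. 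This buys you two things: you avoid both the density claim for $C_c^1\circ s$ and the du Bois-Reymond step, replacing them with a concrete surjectivity construction plus elementary Hilbert-space duality in $L^2(G)$; and the Case III constant emerges transparently as the annihilator of a codimension-one constraint (available in $L^2(G)$ exactly because $m(G)<\infty$). The price is that you must verify carefully that each constructed primitive really lands in $\mathcal{F}^{(s)}_\text{e}$, which you do via Cauchy--Schwarz against $s(0)-s(-\infty)$ or $s(\infty)-s(0)$. Your decomposition $u_1(x)=\int_{x_0}^x(u'(y)-c)1_G(y)\,dy$ with $c=m(G)^{-1}\int_G u'\,dy$ in Case III is the paper's construction verbatim (the paper writes $C_1=M/(s(\infty)-s(-\infty))$, which is the same constant), and your computation of $\mathcal{F}^{(s)}_\text{e}\cap\mathcal{G}^{(s)}$ for uniqueness is a direct version of the paper's shortcut $\mathcal{E}(u,u)=0$.
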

\begin{proof}
First we shall prove the characterization \eqref{EQGSH} of $\mathcal{G}^{(s)}$.
Take a function $u\in \mathcal{F}_\text{e}$ such that $u'=C$ a.e. on $G$, where $C$ is a constant. For Case I and II, since $m(G)=\infty$ it follows that $C=0$. Then for any $v\in \mathcal{F}^{(s)}_\text{e}$ we have
\[
	\mathcal{E}(u,v)=\frac{1}{2}\int_\mathbb{R} u'(x)v'(x)dx.
\]
From Lemma~\ref{LM5} we know that $v'=0$ a.e. on $F$ whereas $u'=0$ a.e. on $G$. Hence we obtain that $\mathcal{E}(u,v)=0$. For Case III, since $(\mathcal{E}^{(s)},\mathcal{F}^{(s)})$ is transient we know that $C_c^1\circ s:=\{\varphi\circ s:\varphi\in C_c^1(s(\mathbb{R}))\}$ is $\mathcal{E}$-dense in $\mathcal{F}^{(s)}_\text{e}$. For any function $v\in C_c^1\circ s$, clearly $v'=0$ a.e. on $F$. Thus we have
\[
	\mathcal{E}(u,v)=\frac{1}{2}\int_G u'(x)v'(x)dx=\frac{C}{2}\int_\mathbb{R}v'(x)dx=0.
\]
Generally for $v\in \mathcal{F}^{(s)}_\text{e}$ we can take a sequence $\{v_n:n\geq 1\}$ in $C_c^1\circ s$ such that $\mathcal{E}(v_n-v,v_n-v)\rightarrow 0$ as $n\rightarrow \infty$. Since $\mathcal{E}(u,v_n)=0$ we can deduce that $\mathcal{E}(u,v)=0$.

{To the contrary}, take a function $u\in \mathcal{G}^{(s)}$. Since $C_c^1\circ s\subset \mathcal{F}^{(s)}_\text{e}$ it follows that
\[
	\mathcal{E}(u,v)=0,\quad v\in C_c^1\circ s.
\]
Let $t$ be the inverse function of $s$, i.e. $t=s^{-1}$. Then
\[
	\int_{s(\mathbb{R})}u'(t(x))\varphi'(x)dx=0
\]
for any $\varphi\in C_c^1(s(\mathbb{R}))$. It follows that $u'\circ t$ is a constant a.e. on $s(\mathbb{R})$. Denote all of such $x\in s(\mathbb{R})$ by $J$, i.e. $u'\circ t$ is a constant on $J$.  Let $\tilde{J}:=t(J)$. Then $u'$ is a constant on $\tilde{J}$. On the other hand
\[
	m(G\setminus \tilde{J})=\int_{\tilde{J}^c}1_G(x)dx=\int_{\tilde{J}^{c}}ds(x)=m(s(\mathbb{R})\setminus J)=0.
\]
Thus $u'$ is a constant a.e. on $G$. Thus \eqref{EQGSH} is proved.

{Note that any function $u\in \mathcal{F}_\mathrm{e}$ satisfies $u'\in L^2(\mathbb{R})$.} In particular, if $m(G)=\infty$,
then it follows that any function $u$ in $\mathcal{G}^{(s)}$ satisfies that $u'=0$ a.e. on $G$, i.e. \eqref{EQMGSU} is proved.

Finally we shall construct the decomposition \eqref{EQUUU} for any $u\in \mathcal{F}_\mathrm{e}$. Assume $C_0=u(0)$. For any $x\in \mathbb{R}$,
	\[
	\begin{aligned}
		u(x)-C_0=\int_0^x u'(y)dy=\int_0^x u'(y)1_G(y)dy+\int_0^x u'(y)1_F(y)dy.
	\end{aligned}\]
First for Case I, define
\begin{equation}\label{EQUXI}
	u_1(x)=\int_0^x u'(y)1_G(y)dy,\quad x\in \mathbb{R}
\end{equation}
and $u_2=u-u_1$. It follows from {Lemma~\ref{LM5} and  the first assertion of Theorem~\ref{THM3}} that $u_1\in \mathcal{F}^{(s)}_\text{e}$ and $u_2\in \mathcal{G}^{(s)}$. Secondly for Case II without loss of generality assume that {$s(-\infty)>-\infty$} but $s(\infty)=\infty$. Then
\[
	\bigg|\int_{-\infty}^0 u'(y)1_G(y)dy\bigg |^2 \leq \int_{-\infty}^0 u'(y)^2dy \cdot (s(0)-s(-\infty))<\infty.
\]
Let $M_{-\infty}:=\int_{-\infty}^0 u'(y)1_G(y)dy$ which is a finite constant and define
\[
	u_1(x):=\int_0^x u'(y)1_G(y)dy+M_{-\infty},\quad x\in \mathbb{R}
\]
and $u_2:=u-u_1$. It follows that
\[
	\lim_{x\rightarrow -\infty}u_1(x)=-\int_{-\infty}^0u'(y)1_G(y)dy+M_{-\infty}=0.
\]
Thus we can also deduce that $u_1\in \mathcal{F}^{(s)}_\text{e}$ and $u_2\in \mathcal{G}^{(s)}$. Finally for Case III  we can similarly deduce that
\[
	M:=\int_{-\infty}^\infty u'(y)1_G(y)dy
\]
is finite. Let $C_1:=M/(s(\infty)-s(-\infty))$ and
\[
	C_2:=\int_{-\infty}^0(u'(y)-C_1)1_G(y)dy
\]
which are both finite constants. Define
\[
	u_1(x):=\int_0^x (u'(y)-C_1)1_G(y)dy+C_2,\quad x\in \mathbb{R}
\]
and $u_2:=u-u_1$. Note that
\[
	\lim_{x\rightarrow -\infty}u_1(x)=-\int_{-\infty}^0(u'(y)-C_1)1_G(y)dy+C_2=0
	\]
	and
\[
	\lim_{x\rightarrow \infty}u_1(x)=\int_{-\infty}^\infty(u'(y)-C_1)1_G(y)dy=0.
\]
Hence it follows that $u_1\in \mathcal{F}^{(s)}_\text{e}$. We claim that $u_2\in \mathcal{G}^{(s)}$. In fact for a.e. $x\in G$,
\[
	u'_2(x)=u'(x)-u'_1(x)=u'(x)-(u'(x)-C_1)1_G(x)=C_1.
\]
It follows from Theorem~\ref{THM3} that $u_2\in \mathcal{G}^{(s)}$.

Now assume $u\in \mathcal{F}^{(s)}_\text{e}\cap \mathcal{G}^{(s)}$. It follows from \eqref{EQGSU} that $\mathcal{E}(u,u)=0$. Thus $u\equiv C$ for some constant $C$. If $(\mathcal{E}^{(s)},\mathcal{F}^{(s)})$ is transient, since $u\in \mathcal{F}^{(s)}_\text{e}$ we have $\lim_{x\rightarrow -\infty \text{ or }\infty}u(x)=0$. Thus $C=0$ and $u\equiv 0$. Otherwise $(\mathcal{E}^{(s)},\mathcal{F}^{(s)})$ is recurrent, $C$ is not necessarily $0$. In fact when defining $u_1$ for this case,  the decomposition is still valid if we add any constant to right side of \eqref{EQUXI}. Therefore the decomposition \eqref{EQUUU} is unique up to a constant when $(\mathcal{E}^{(s)},\mathcal{F}^{(s)})$ is recurrent.
\end{proof}

\begin{corollary}
	Let $u\in \mathcal{F}_\text{e}$ and $u_2$ the function in the decomposition \eqref{EQUUU}. Then $u'=u'_2$ a.e. on $F$.
\end{corollary}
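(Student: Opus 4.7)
The plan is a one-line argument resting on the decomposition \eqref{EQUUU} together with Lemma~\ref{LM5}. By Theorem~\ref{THM3} we may write $u = u_1 + u_2$ with $u_1 \in \mathcal{F}^{(s)}_\text{e}$ and $u_2 \in \mathcal{G}^{(s)} \subset \mathcal{F}_\text{e}$. Since all three functions $u, u_1, u_2$ lie in $\mathcal{F}_\text{e} = H^1_\text{e}(\mathbf{R})$, they are absolutely continuous on $\mathbf{R}$, so the identity $u' = u_1' + u_2'$ holds almost everywhere.

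The characterization in Lemma~\ref{LM5} gives $u_1' = 0$ almost everywhere on $F$. Substituting this into the pointwise derivative relation and restricting to $F$ yields $u' = u_2'$ a.e.\ on $F$, which is the claim.

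There is essentially no obstacle; the point of the corollary is to record that, although the decomposition is non-unique in the recurrent case, the $F$-trace of the derivative of $u$ depends only on the $\mathcal{G}^{(s)}$-component. This will presumably be useful later when identifying the strongly local part of $(\check{\mathcal{E}},\check{\mathcal{F}})$ with the contribution coming from $\mathcal{G}^{(s)}$, via the formula $\mu_{\langle H_F \varphi\rangle}(F) = \int_F \varphi'(x)^2 dx$ established in the proof of Theorem~\ref{THM5}.
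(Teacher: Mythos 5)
Your proof is correct and matches the paper's (implicit) reasoning: the corollary is stated without proof precisely because it follows immediately from the decomposition $u=u_1+u_2$ of Theorem~\ref{THM3} together with the characterization $u_1'=0$ a.e.\ on $F$ from Lemma~\ref{LM5}, which is exactly the argument you give. Your closing remark about the role of the corollary in isolating the strongly local part of the trace form is also consistent with how the paper uses it.
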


We present the following decomposition similar to \eqref{EQUUU} for the functions in $\mathcal{H}_F$.

\begin{proposition}\label{PROP3}
	Any $u\in \mathcal{H}_F$ can be expressed as
	\[
		u=u_1+u_2
	\]
for some $u_1\in \mathcal{H}^{(s)}_F$ and $u_2\in \mathcal{G}^{(s)}$. This decomposition is unique if $(\mathcal{E}^{(s)},\mathcal{F}^{(s)})$ is transient and  unique up to a constant if $(\mathcal{E}^{(s)},\mathcal{F}^{(s)})$ is recurrent. In particular,
\begin{equation}\label{EQGSUH}
	\mathcal{G}^{(s)}=\{u\in \mathcal{H}_F: \mathcal{E}(u,v)=0\text{ for any } v\in \mathcal{H}^{(s)}_F\}.
\end{equation}
\end{proposition}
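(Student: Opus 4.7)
The plan is to reduce this proposition to the decomposition already established in Theorem~\ref{THM3}, leveraging the crucial identity $\mathcal{F}_{\text{e},G}=\mathcal{F}^{(s)}_{\text{e},G}$ from Lemma~\ref{COR3}.

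First I would establish the decomposition. Given $u\in\mathcal{H}_F\subset\mathcal{F}_\text{e}$, apply Theorem~\ref{THM3} to write $u=u_1+u_2$ with $u_1\in\mathcal{F}^{(s)}_\text{e}$ and $u_2\in\mathcal{G}^{(s)}$. The only thing to check is that $u_1$ actually lies in $\mathcal{H}^{(s)}_F$. For any $w\in\mathcal{F}^{(s)}_{\text{e},G}$, Lemma~\ref{COR3} gives $w\in\mathcal{F}_{\text{e},G}$, so $\mathcal{E}(u,w)=0$ by definition of $\mathcal{H}_F$. On the other hand $w\in\mathcal{F}^{(s)}_\text{e}$, so $\mathcal{E}(u_2,w)=0$ by the definition of $\mathcal{G}^{(s)}$. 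Since $\mathcal{E}^{(s)}$ agrees with $\mathcal{E}$ on $\mathcal{F}^{(s)}_\text{e}\times\mathcal{F}^{(s)}_\text{e}$, we get
\[
\mathcal{E}^{(s)}(u_1,w)=\mathcal{E}(u_1,w)=\mathcal{E}(u,w)-\mathcal{E}(u_2,w)=0,
\]
so $u_1\in\mathcal{H}^{(s)}_F$ as required. Uniqueness (up to a constant in the recurrent case) is then inherited directly from the corresponding statement in Theorem~\ref{THM3}, since any two such decompositions of $u$ in $\mathcal{H}_F$ are, a fortiori, decompositions of $u$ in $\mathcal{F}_\text{e}$ relative to $\mathcal{F}^{(s)}_\text{e}$ and $\mathcal{G}^{(s)}$.

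Next, for the characterization \eqref{EQGSUH}, I would prove both inclusions directly. For $\subset$: if $u\in\mathcal{G}^{(s)}$ then $\mathcal{E}(u,v)=0$ for every $v\in\mathcal{F}^{(s)}_\text{e}$; specializing to $v\in\mathcal{F}^{(s)}_{\text{e},G}=\mathcal{F}_{\text{e},G}$ (Lemma~\ref{COR3}) yields $u\in\mathcal{H}_F$, and specializing to $v\in\mathcal{H}^{(s)}_F\subset\mathcal{F}^{(s)}_\text{e}$ gives the orthogonality condition. For $\supset$: given $u\in\mathcal{H}_F$ with $\mathcal{E}(u,v)=0$ for every $v\in\mathcal{H}^{(s)}_F$, pick an arbitrary $v\in\mathcal{F}^{(s)}_\text{e}$ and use the decomposition \eqref{EQVVV} to write $v=v_1+v_2$ with $v_1\in\mathcal{F}^{(s)}_{\text{e},G}$ and $v_2\in\mathcal{H}^{(s)}_F$; then $\mathcal{E}(u,v_1)=0$ since $v_1\in\mathcal{F}_{\text{e},G}$ and $u\in\mathcal{H}_F$, and $\mathcal{E}(u,v_2)=0$ by hypothesis, so $\mathcal{E}(u,v)=0$ and hence $u\in\mathcal{G}^{(s)}$.

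There is no real obstacle here; the whole argument is a bookkeeping exercise that hinges entirely on Lemma~\ref{COR3} (which identifies the two part-spaces on $G$) together with Theorem~\ref{THM3} and the decomposition \eqref{EQVVV}. The only mildly delicate point is keeping track of the recurrent case, where decompositions are only unique up to additive constants, but this is automatically handled by invoking the corresponding clause of Theorem~\ref{THM3} rather than arguing afresh.
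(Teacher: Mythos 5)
Your proposal is correct and follows essentially the same route as the paper: both directions of \eqref{EQGSUH} are obtained exactly as in the paper's proof via Lemma~\ref{COR3} and the decomposition \eqref{EQVVV}, and the existence and uniqueness of the decomposition are reduced to Theorem~\ref{THM3} with the same orthogonality computation showing $u_1\in\mathcal{H}^{(s)}_F$. The only difference is cosmetic ordering (you prove the decomposition before \eqref{EQGSUH}, the paper does the reverse), which changes nothing of substance.
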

\begin{proof}
We first prove \eqref{EQGSUH}. Since $\mathcal{F}_{\text{e},G}=\mathcal{F}^{(s)}_{\text{e},G}\subset \mathcal{F}^{(s)}_\text{e}$ and $\mathcal{H}^{(s)}_F\subset \mathcal{F}^{(s)}_\text{e}$, it follows that for any $u\in \mathcal{G}^{(s)}$, $u\in \mathcal{H}_F$ and $\mathcal{E}(u,v)=0$ for any  $v\in \mathcal{H}^{(s)}_F$. {To the contrary}, let $u$ be a function in the right side of \eqref{EQGSUH} and $w$ a function in $\mathcal{F}^{(s)}_\text{e}$. Suppose that $w=w_1+w_2$ is the decomposition of $w$ in \eqref{EQVVV}. Since $u\in \mathcal{H}_F$ and $w_1\in \mathcal{F}^{(s)}_{\text{e},G}=\mathcal{F}_{\text{e},G}$ we have $\mathcal{E}(u,w_1)=0$. Moreover $\mathcal{E}(u,w_2)=0$ is also clear.

Now for any $u\in \mathcal{H}_F\subset \mathcal{F}_\text{e}$, it can be expressed as
\[
	u=u_1+u_2
\]
for some $u_1\in \mathcal{F}^{(s)}_\text{e}$ and $u_2\in \mathcal{G}^{(s)}$. We claim that $u_1\in \mathcal{H}^{(s)}_F$. To this end, since $u_2\in \mathcal{G}^{(s)}\subset \mathcal{H}_F$ it follows that $u_1\in \mathcal{H}_F$. Thus for any $w\in \mathcal{F}^{(s)}_{\text{e},G}=\mathcal{F}_{\text{e},G}$ we can deduce that
\[
	\mathcal{E}^{(s)}(u_1,w)=\mathcal{E}(u_1,w)=0.
\]
Hence $u_1\in \mathcal{H}^{(s)}_F$. The uniqueness can be proved through the similar way to Theorem~\ref{THM3}.  
\end{proof}

\begin{remark}
	Note that if $(\mathcal{E}^{(s)},\mathcal{F}^{(s)})$ is transient then any $u\in \mathcal{F}_\text{e}$ can be expressed uniquely as
	\[
		u=u_1+u_2+u_3,
	\]
where $u_1\in \mathcal{F}^{(s)}_{\text{e},G}=\mathcal{F}_{\text{e},G}, u_2\in \mathcal{H}^{(s)}_F, u_3\in \mathcal{G}^{(s)}$ and $\hat{u}_2:=u_2+u_3\in \mathcal{H}_F$. This decomposition is similar to the orthogonal decomposition with respect to the quadratic form $\mathcal{E}$ whereas $\mathcal{F}_\text{e}$ is not a Hilbert space. Otherwise when $(\mathcal{E}^{(s)},\mathcal{F}^{(s)})$ is  recurrent we also have such kind of decomposition. In particular $u_1$ and $\hat{u}_2$ are unique but $u_2$ and $u_3$ are only unique up to a constant.
\end{remark}

\subsection{Regular representation via darning transform}\label{SEC32}

In the rest of this section, we always assume that $m(G)=\infty$, i.e.  
\[
	s(-\infty)=-\infty \text{ or } s(\infty)=\infty.
\]
It follows from Theorem~\ref{THM3} that
\[
	\mathcal{G}^{(s)}=\{u\in \mathcal{F}_\text{e}:u'=0\text{ a.e. on }G\}.
\]
That means for any function $u\in \mathcal{G}^{(s)}$ and $I_n$ a component of $G$, $u$ is a constant on $I_n$. Let $\mathcal{G}^{(s)}_0:=\mathcal{G}^{(s)}\cap L^2(\mathbb{R})=\{u\in \mathcal{F}:u'=0\text{ a.e. on }G\}$. In fact $\mathcal{G}^{(s)}_0$ is very close to a Dirichlet space.

Let $r_-=\inf\{x: x\in F\}$, $r_+:=\sup\{x:x\in F\}$ and 
\begin{equation}\label{EQIRR}
I:=(r_-,r_+).
\end{equation}
 If $r_->-\infty$ (resp. $r_+<\infty$), then $u=0$ on $(-\infty, r_-]$ (resp. $[r_+,\infty)$) for any $u\in \mathcal{G}^{(s)}_0$. 
Hence the quadratic form $(\mathcal{E}, \mathcal{G}^{(s)}_0)$ on $L^2(\mathbb{R})$ can be identified with { the one} on $L^2(I,m_I)$, where $m_I$ is the restriction to $I$ of the Lebesgue measure and {$\mathcal{G}^{(s)}_0$} can be written as
{\[
	\mathcal{G}^{(s)}_0=\{u\in H^1_{0,\mathrm{e}}(I)\cap L^2(I,m_I): u'=0\text{ a.e. on } G\cap I\},
\]
where 
\[
	H^1_{0,\mathrm{e}}=\{u\in H^1_\mathrm{e}(I): u(r_{\pm})=0 \text{ whenever } r_{\pm} \text{ is finite} \}. 
\]
Then we have the following lemma.}

{
\begin{lemma}\label{LM6}
The form $(\mathcal{E},\mathcal{G}^{(s)}_0)$ is a Dirichlet form on $L^2(I, m_I)$ in the wide sense, i.e. it satisfies all conditions of Dirichlet form except for the denseness of $\mathcal{G}^{(s)}_0$ in $L^2(I, m_I)$.
\end{lemma}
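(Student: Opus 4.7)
The plan is to verify each defining condition of a Dirichlet form except $L^2$-denseness of the domain, namely: symmetry and bilinearity, nonnegativity, closedness in $L^2(\mathbf{R})$, and the Markovian unit-contraction property. Symmetry, bilinearity, and nonnegativity of $\mathcal{E}$ restricted to $\mathcal{G}^{(s)}_0 \times \mathcal{G}^{(s)}_0$ are inherited from $(\mathcal{E},\mathcal{F})=(\tfrac{1}{2}\mathbf{D},H^1(\mathbf{R}))$ for free, since $\mathcal{G}^{(s)}_0$ is by definition a linear subspace of $\mathcal{F}$. So the two substantive steps are closedness and Markovianity.

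For closedness, write $\mathcal{E}_1:=\mathcal{E}+(\cdot,\cdot)_{L^2(\mathbf{R})}$ and take any $\mathcal{E}_1$-Cauchy sequence $\{u_n\}\subset \mathcal{G}^{(s)}_0\subset \mathcal{F}$. Since $(\mathcal{E},\mathcal{F})$ is itself a Dirichlet form, hence $\mathcal{E}_1$-closed, there exists $u\in\mathcal{F}$ with $u_n\to u$ in $\mathcal{E}_1$; in particular $u'_n\to u'$ in $L^2(\mathbf{R})$. Passing to a subsequence I may assume $u'_n\to u'$ Lebesgue-a.e., and since $u'_n=0$ a.e. on $G$ for every $n$, the limit inherits $u'=0$ a.e. on $G$, so $u\in \mathcal{G}^{(s)}_0$. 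For the Markov property, I would invoke the standard chain rule for Sobolev functions: for any $u\in H^1(\mathbf{R})$ the unit contraction $v:=(0\vee u)\wedge 1$ lies in $H^1(\mathbf{R})$ with $v'=\mathbf{1}_{\{0<u<1\}}u'$ a.e., so $|v'|\leq |u'|$ pointwise-a.e. If moreover $u\in \mathcal{G}^{(s)}_0$, then $v'=0$ a.e. on $G$, i.e., $v\in \mathcal{G}^{(s)}_0$, and the pointwise bound integrates to $\mathcal{E}(v,v)\leq \mathcal{E}(u,u)$.

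There is no serious obstacle: the analytically nontrivial content (closedness and Markovianity of $(\mathcal{E},\mathcal{F})$) is already in hand, and the only extra constraint $u'=0$ a.e. on $G$ cutting out $\mathcal{G}^{(s)}_0$ is visibly preserved both under $L^2$-limits of derivatives, via a.e.\ subsequential convergence, and under the pointwise-a.e.\ chain-rule formula for the unit contraction. The hypothesis $m(G)=\infty$ enters only through Theorem~\ref{THM3} to justify the clean description $\mathcal{G}^{(s)}_0=\{u\in \mathcal{F}:u'=0\text{ a.e. on }G\}$ that the whole argument relies on; it plays no further role in these verifications.
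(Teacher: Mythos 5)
Your proposal is correct and follows essentially the same route as the paper: the closedness argument (pass to an a.e.-convergent subsequence of derivatives to preserve $u'=0$ a.e.\ on $G$) is identical, and your Markovianity check via the unit contraction and the chain rule is just the specific-contraction version of the paper's argument with a general normal contraction $\varphi$, using the same observation that $(\varphi\circ u)'=\varphi'(u)u'$ vanishes a.e.\ on $G$.
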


The closeness and Markovian property of $(\mathcal{E}, \mathcal{G}^{(s)}_0)$ on $L^2(I,m_I)$ can be deduced directly from the definitions. So the proof of Lemma~\ref{LM6} is trivial.}

We shall now find the regular representation of $(\mathcal{E}, \mathcal{G}^{(s)}_0)$. The
notion of regular representation of Dirichlet space was introduced by M. Fukushima in his cornerstone paper \cite{FU2}.
In his terminologies, a Dirichlet form in wide sense is also called a D-space and he denoted it by $(E,\xi, \mathcal{F},\mathcal{E})$,
where $(E,\xi)$ is the state space with a measure and $(\mathcal{E,F})$ is the Dirichlet form on $L^2(E,\xi)$ in wide sense. Due to Lemma~\ref{LM6},
{$(I,m_I, \mathcal{G}^{(s)}_0, \mathcal{E})$} is a D-space. Two D-spaces $(E,\xi, \mathcal{F},\mathcal{E})$ and $(E',\xi', \mathcal{F}',\mathcal{E}')$
 are called \emph{equivalent} if there is an algebraic isomorphism $\Phi$ from $\mathcal{F}\cap L^\infty(E)$ onto $\mathcal{F}'\cap L^\infty(E')$ and $\Phi$ preserves three kinds of metrics:
 \[
 	||u||_{\infty}=||\Phi u||_\infty,\ \mathcal{E}(u,u)=\mathcal{E}'(\Phi u,\Phi u) \text{ and } (u,u)_\xi=(\Phi u,\Phi u)_{\xi'}.
 \]
For a given D-space $(E,\xi, \mathcal{F},\mathcal{E})$, another  D-space $(E',\xi', \mathcal{F}',\mathcal{E}')$ is called a regular representation of $(E,\xi, \mathcal{F},\mathcal{E})$ if they are equivalent and $(\mathcal{E}',\mathcal{F}')$ is a regular Dirichlet form on $L^2(E',\xi')$. By using Gelfand representations of subalgebras of $L^\infty$, Fukushima  proved that regular representations always
 exist for any D-space in Theorem~2 of \cite{FU2}, but to find it is another story.

We shall introduce the `darning' transform on D-space {$(I,m_I,\mathcal{G}^{(s)}_0, \mathcal{E})$} to find its regular representation. This transform is in fact darning each {finite} component $I_n$ of $G$ and its endpoints into a whole part and regarding this whole part as a new `point' in the fresh state space. To be more precise, fix a point $z\in F\setminus H$, where $H$ is given by \eqref{EQHAB} and a surjective mapping $j$ from {$I$} to {$\mathbb{R}^*_0:=j(I)$} is defined through the following way: for any {$x\in I$},
\begin{equation}\label{EQJXI}
	j(x):=\int_z^x 1_F(t)dt.
\end{equation}
Note that $j(z)=0$ and $j$ is non-decreasing. Recall that $I=(r_-,r_+)$ is given by \eqref{EQIRR}.

\begin{lemma}\label{LM33}
Let $r^*_+:=j(r_+)$ and $r^*_-:=j(r_-)$. Then {$\mathbb{R}^*_0=(r^*_-,r^*_+)$}. Furthermore, $j(x)=j(y)$ if and only if $x,y\in \bar{I}_n$ for some {finite} component $I_n$ of $G$, where $\bar{I}_n$ is the closure of $I_n$.
\end{lemma}
\begin{proof}
The first assertion is obvious. For the second assertion, `if part' is also clear. For its `only if' part, let $x,y\in F$ such that $x<y$ and $(x,y)$ is not contained in a component of $G$, we claim that $j(x)<j(y)$. In fact, it follows that
 $F\cap (x,y)$ is not empty. Take $w\in F\cap (x,y)$ and let $d=|x-w|\wedge|w-y|$. Recall that for $0<\epsilon<d/2$, we have $m(F\cap (w-\epsilon,w+\epsilon))>0$. Thus we can deduce that
\[
	j(y)-j(x)=\int_x^y 1_F(t)dt \geq \int_{w-\epsilon}^{w+\epsilon}1_F(t)dt>0.
\]
That completes the proof. 
\end{proof}

\begin{figure}
\centering
\includegraphics[scale=0.45]{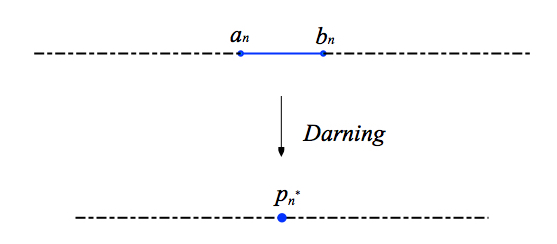}
\caption{Darning}\label{DAR}
\end{figure}

By Lemma~\ref{LM33}, we can denote 
\[
	p_n^*:=j(\bar{I}_n). 
\]
Then $\{p_n^*: n\geq 1\}$ is dense in {$\mathbb{R}^*_0$}, and $j|_{F\setminus H}$ is a bijective mapping from $F\setminus H$ to ${\mathbb{R}^*_0}\setminus \{p_n^*:n\geq 1\}$. See \textsc{Fig}~\ref{DAR} for an intuitive illustration. 

\begin{remark}
Note that $G$ may have no infinite components.
For example, let $K$ be a generalized Cantor set on $[0,1]$ such that $m(K)=\lambda>0$ and define $$F:=\bigcup_{k\in \mathbf{Z}}(K+2k),$$ where $K+2k:=\{x+2k:x\in K\}$. Then $G:=F^c$ is an open set whose components are all finite but $m(G)=\infty$. In this case, 
{we have $I=\mathbb{R}, \mathbb{R}^*_0=(r^*_-,r^*_+)$. }
\end{remark}

{Define a measure $m^*_0$ on $\mathbb{R}^*_0$ by
\begin{equation}\label{EQROJ}
\quad m^*_0=m_I\circ j^{-1},
\end{equation}
which is a Radon measure on $\mathbb{R}^*_0$.}
Heuristically,  $\mathbb{R}^*_0$ endowed with the measure $m^*_0$ is transformed from $\mathbb{R}$ with $m$ by darning the closure of each finite component of $G$ into a single point and getting rid of the (possible) infinite component of $G$. 

Since $j$ collapses each $\bar{I}_n$ into a `point', say $p^*_n$, and any function $u$ in $\mathcal{G}^{(s)}_0$  is exactly a constant on each $\bar{I}_n$, this function $u$ determines a unique function $\hat{u}$ on  {$\mathbb{R}^*_0$} through a `darning' method:
\begin{equation}\label{EQUJU}
 \hat{u}\circ j=u.
\end{equation}
Precisely, $\hat u$ is expressed as $\hat{u}(p_n^*):=u(a_n)$ or $u(b_n)$ for any $n\geq 1$ and $$\hat{u}|_{{\mathbb{R}^*_0}\setminus\{p^*_n:n\geq 1\}}:=u|_{F\setminus H}\circ j^{-1}.$$
Further define
\begin{equation}\label{EQGSUU}
\begin{aligned}
	\mathcal{G}^{(s)*}_0:=\{\hat u:\ u\in\mathcal{G}^{(s)}_0\}.
\end{aligned}
\end{equation}
Clearly $u\mapsto \hat u$ is a linear bijection between $\mathcal{G}^{(s)}_0$ and $\mathcal{G}^{(s)*}_0$.
For any $\hat{u},\hat{v}\in \mathcal{G}^{(s)*}_0$, define a form $\mathcal{E}^*$ by
\begin{equation}\label{EQEJUVM}
\mathcal{E}^*(\hat{u},\hat{v}):=\mathcal{E}(u,v)=\frac{1}{2}\int_\mathbb{R}u'(x)v'(x)dx.
\end{equation}
The form $(\mathcal{E}^*, \mathcal{G}^{(s)*}_0)$ is called the \emph{darning transform} of $(\mathcal{E},\mathcal{G}^{(s)}_0)$ induced by $j$. 
The following theorem mentions that $(\mathcal{E}^*,\mathcal{G}^{(s)*}_{0})$ is truly a Dirichlet form on $L^2(\mathbb{R}^*_0,m^*_0)$.

\begin{theorem}\label{prop5}
The quadratic form $(\mathcal{E}^*,\mathcal{G}^{(s)*}_0)$ defined by \eqref{EQGSUU} and \eqref{EQEJUVM} can be expressed as
\begin{equation}\label{EQMGS}
\begin{aligned}
	\mathcal{G}^{(s)*}_0&= H^1_{0,\mathrm{e}}({\mathbb{R}^*_0})\bigcap L^2(\mathbb{R}^*_0,m^*_0),  \\
	\mathcal{E}^*(\hat{u},\hat{v})&=\frac{1}{2}\int_{{\mathbb{R}^*_0}}\hat{u}'(x)\hat{v}'(x)dx,\quad \forall \hat{u},\hat{v}\in \mathcal{G}^{(s)*}_0,
\end{aligned}
\end{equation}	
where 
\[
	H^1_{0,\mathrm{e}}({\mathbb{R}^*_0})=\left\{\hat{u}\in H^1_\mathrm{e}({\mathbb{R}^*_0}): \hat{u}(r^*_\pm)=0 \text{ whenever } r^*_\pm \text{ is finite}\right\}. 
\]
Furthermore,  a regular representation of D-space {$(I,m_I,\mathcal{G}^{(s)}_0,\mathcal{E})$} can be realized by the regular local Dirichlet form $(\mathcal{E}^*,\mathcal{G}^{(s)*}_0)$ on  $L^2(\mathbb{R}^*_0,m^*_0)$. {Its associated diffusion process is an absorbing Brownian motion on $\mathbb{R}^*_0$ being time changed by its positive continuous additive functional with the Revuz measure $m^*_0$.}
\end{theorem}

\begin{proof}
We first prove $\mathcal{G}^{(s)*}_0= H^1_{0,\mathrm{e}}({\mathbb{R}^*_0})\bigcap L^2(\mathbb{R}^*_0,m^*_0)$. 
Take a function $\hat{u}$ in the right side and let $u:=\hat{u}\circ j$. To prove $\hat{u}\in \mathcal{G}^{(s)*}_0$, it suffices to prove that $u\in \mathcal{G}^{(s)}_0$. Since $j(I_n)=p_n^*$, it follows that $u$ is a constant on any component $I_n$ of $G_n$. On the other hand take any $x\in {I}$, we have
\[
	u(x)-u(z)=\hat{u}(j(x))-\hat{u}(j(z))=\int_z^x \hat{u}'\circ j(t)1_F(t)dt,
\]
where $z$ is the starting point when defining $j$ in \eqref{EQJXI}. It follows from $\hat{u}'\in L^2({\mathbb{R}^*_0})$ that $\hat{u}'\circ j\cdot 1_F\in L^2({I})$. Thus $u$ is absolutely continuous and $u'=\hat{u}'\circ j\cdot 1_F$ (in sense of a.e.) is in $L^2({I})$. In other words, $u\in \mathcal{F}_\mathrm{e}$ and $u'=0$ a.e. on $G$. That indicates $u\in \mathcal{G}^{(s)}$. Note that $\lim_{x\rightarrow r^*_\pm} \hat{u}(x)=0$. Then from $\hat{u}\in L^2(\mathbb{R}^*_0, m^*_0)$ and \eqref{EQROJ}, we can obtain $u\in L^2({I})$. Thus $u\in \mathcal{G}^{(s)}\cap L^2({I})=\mathcal{G}^{(s)}_0$. 

{To the contrary}, take $\hat{u}\in \mathcal{G}^{(s)*}_0$ and let $u$ be given by \eqref{EQUJU}. Since $u\in L^2({I})$, it follows that $\hat{u}\in L^2(\mathbb{R}^*_0,m^*_0)$ and $\lim_{x\rightarrow r^*_\pm}\hat{u}(x)=0$. We only need to prove $\hat{u}$ is absolutely continuous on ${\mathbb{R}^*_0}$ and $\hat{u}'\in L^2({\mathbb{R}^*_0})$. Note that ${\mathbb{R}^*_0}\setminus \{p_n^*:n\geq 1\}$ and $F\setminus H$ have a one-to-one correspondence. Moreover the Lebesgue measure on ${\mathbb{R}^*_0}\setminus \{p_n^*:n\geq 1\}$ corresponds to $1_F(x)dx$ on $F\setminus H$ via the transform $j$.
Since $u'=0$ on $G$, we can define an a.e. defined function $u'\circ j^{-1}$ through darning method. Precisely, $u'\circ j^{-1}(p_n^*):=0$ and $u'\circ j^{-1}(\hat{x}):=u'(j^{-1}(\hat{x}))$ for a.e. $\hat{x}\in  {\mathbb{R}^*_0}\setminus \{p_n^*:n\geq 1\}$ such that $u$ is differentiable at $x:=j^{-1}(\hat{x})$. Clearly $u'\circ j^{-1}\in L^2({\mathbb{R}^*_0})$.
Without loss of generality, take a point $\hat{y}\in {\mathbb{R}^*_0}$ with $\hat{y}>0$. Let $y:=j^{-1}(\hat{y})$ if $\hat{y}\notin  \{p_n^*:n\geq 1\}$ and set $y$ to be the left endpoint of interval $j^{-1}(\hat{y})$ if $\hat{y}\in  \{p_n^*:n\geq 1\}$. Then
\[
	\hat{u}(\hat{y})-\hat{u}(0)=u(y)-u(z)=\int_z^y u'(t)1_F(t)dt=\int_z^y u'\circ j^{-1} (j(t))dj(t)
\]
and it follows that
\[
	\hat{u}(\hat{y})-\hat{u}(0)=\int_{j(z)}^{j(y)}u'\circ j^{-1}(\hat{t})d\hat{t}=\int_0^{\hat{y}}u'\circ j^{-1}(\hat{t})d\hat{t}.
\]
Since $u'\circ j^{-1}\in L^2({\mathbb{R}^*_0})$, we can conclude that $\hat{u}$ is absolutely continuous on ${\mathbb{R}^*_0}$ and
\begin{equation}\label{EQHUUC}
\hat{u}'=u'\circ j^{-1} \text{ a.e. on } {\mathbb{R}^*_0}.
\end{equation}
Therefore $\hat{u}\in H^1_{0,\mathrm{e}}({\mathbb{R}^*_0})\bigcap L^2(\mathbb{R}^*_0,m^*_0)$.

For any $\hat{u},\hat{v}\in \mathcal{G}^{(s)*}_0$, let $u,v$ be given by \eqref{EQUJU} respectively. Note that $u'=v'=0$ on $G$. Thus we have
\[
	\mathcal{E}^*(\hat{u},\hat{v})=\frac{1}{2}\int_\mathbb{R}u'(x)v'(x)dx=\frac{1}{2}\int_\mathbb{R}u'\circ j^{-1}(j(x))v'\circ j^{-1}(j(x))dj(x).
\]
From \eqref{EQHUUC} we can obtain that
\[
	\mathcal{E}^*(\hat{u},\hat{v})=\frac{1}{2}\int_{{\mathbb{R}^*_0}}u'\circ j^{-1}(x)v'\circ j^{-1}(x)dx=\frac{1}{2}\int_{{\mathbb{R}^*_0}} \hat{u}'(x)\hat{v}'(x)dx.
\]
In other words,  \eqref{EQMGS} is proved.

Clearly, $(\mathcal{E}^*,\mathcal{G}^{(s)*}_{0})$ is a regular local Dirichlet form on $L^2(\mathbb{R}^*_0,m^*_0)$, whose associated diffusion process is {an absorbing Brownian motion on $\mathbb{R}^*_0$ being time changed by its positive continuous additive functional with the Revuz measure $m^*_0$ by virtue of Theorem~4.2 of \cite{F10} applied to $s(x)=x$.} 

Finally, we shall prove that the D-space $(\mathbb{R}^*_0, m^*_0,\mathcal{G}^{(s)*}_{0},\mathcal{E}^*)$ is equivalent to {$(I,m_I,\mathcal{G}^{(s)}_0,\mathcal{E})$}. Clearly
the map \begin{align*}
	\Phi:&\ u\mapsto \hat{u},\\
&\mathcal{G}^{(s)}_0\cap L^\infty({I})\rightarrow \mathcal{G}^{(s)*}_{0}\cap L^\infty({\mathbb{R}^*_0}),
\end{align*}
is an algebraic isomorphism, where $\hat{u}$ is given by \eqref{EQUJU}.  From \eqref{EQROJ} and \eqref{EQUJU}, we can conclude that $(u,u)_{m_I}=(\hat{u},\hat{u})_{{m^*_0}}$ and $||u||_\infty=||\hat{u}||_\infty$. Moreover $\mathcal{E}^*(\hat{u},\hat{u})=\mathcal{E}(u,u)$ is direct from the definition of $\mathcal{E}^*$ in \eqref{EQEJUVM}. That completes the proof.
\end{proof}

\subsection{On the traces}

In this section, we return to the question raised at the beginning of \S\ref{OCO}. Similarly to \eqref{EQGSU},  the orthogonal complement of $\check{\mathcal{F}}^{(s)}_\text{e}$ in $\check{\mathcal{F}}_\text{e}$ relative to the quadratic form $\check{\mathcal{E}}$ can be defined as
\begin{equation}\label{EQMGSUF}
	\check{\mathcal{G}}^{(s)}:=\{u\in \check{\mathcal{F}}_\text{e}:\check{\mathcal{E}}(u,v)=0\text{ for any }v\in \check{\mathcal{F}}^{(s)}_\text{e}\}.
\end{equation}
From Theorem~\ref{THM3} and \eqref{EQCMFS}, we can deduce that $$\check{\mathcal{G}}^{(s)}=\mathcal{G}^{(s)}|_F$$ and every $u\in \check{\mathcal{F}}_\text{e}$ can be expressed as
\[
	u=u_1+u_2
\]
for some $u_1\in \check{\mathcal{F}}^{(s)}_\text{e}$ and $u_2\in \check{\mathcal{G}}^{(s)}$. This decomposition is unique if $(\mathcal{E}^{(s)},\mathcal{F}^{(s)})$ is transient and unique up to a constant if $(\mathcal{E}^{(s)},\mathcal{F}^{(s)})$ is recurrent. 

In Theorem~\ref{THM5}, we assert that the regular Dirichlet subspace $(\check{\mathcal{E}}^{(s)},\check{\mathcal{F}}^{(s)})$ contains exactly the information of non-local part
of trace Brownian motion $(\check{\mathcal{E}},\check{\mathcal{F}})$ on $F$. Actually the following lemma shows that $\check{\mathcal{G}}^{(s)}$ only contains the strongly local information of $(\check{\mathcal{E}},\check{\mathcal{F}})$.

\begin{lemma}
	For any $u,v\in \check{\mathcal{G}}^{(s)}$, it holds that
	\[
		\check{\mathcal{E}}(u,v)=\frac{1}{2}\int_F u'(x)v'(x)dx.
	\]
\end{lemma}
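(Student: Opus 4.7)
The plan is to reduce this identity to two results already in hand: the Beurling--Deny formula for $\check{\mathcal{E}}$ from Theorem~\ref{THM5}, and the identification $\check{\mathcal{G}}^{(s)} = \mathcal{G}^{(s)}|_F$ together with the characterization of $\mathcal{G}^{(s)}$ from Theorem~\ref{THM3}. By polarization of $\check{\mathcal{E}}$ it is enough to handle the diagonal case $u = v$.

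Fix $u \in \check{\mathcal{G}}^{(s)}$ and lift it to $U \in \mathcal{G}^{(s)}$ with $U|_F = u$. Under the standing assumption $m(G) = \infty$ (Case I or II), Theorem~\ref{THM3} yields $U' = 0$ a.e. on $G$. Since $U$ lies in $\mathcal{F}_\text{e} = H^1_\text{e}(\mathbf{R})$ and is therefore absolutely continuous, $U$ is constant on each component $I_n = (a_n, b_n)$ of $G$; by continuity $U(a_n) = U(b_n)$ whenever both endpoints are finite, hence $u(a_n) = u(b_n)$ for every such $n$. Plugging this into the formula of Theorem~\ref{THM5},
\[
\check{\mathcal{E}}(u,u) = \frac{1}{2}\int_F u'(x)^2\, dx + \frac{1}{2}\sum_{n\geq 1} \frac{(u(a_n) - u(b_n))^2}{|a_n - b_n|},
\]
every jump term collapses (using also the convention $U((a_n,b_n)) = 0$ for the at most two infinite components recorded in Theorem~\ref{THM5}), leaving $\check{\mathcal{E}}(u,u) = \tfrac{1}{2}\int_F u'(x)^2\, dx$. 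Polarizing produces the bilinear identity claimed.

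I do not expect any real obstacle here — the content is entirely an application of Theorems~\ref{THM3} and~\ref{THM5}. The only subtlety worth flagging is the interpretation of $u'$ on $F$: this should be read as the a.e.\ derivative of any $\mathcal{F}_\text{e}$-representative of $u$, and independence of the choice of representative is already implicit in the identity $(H_F\varphi)' = \varphi'$ a.e.\ on $F$ proved inside Theorem~\ref{THM5}.
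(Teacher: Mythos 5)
Your argument is correct and follows essentially the same route as the paper: identify $\check{\mathcal{G}}^{(s)}$ with $\mathcal{G}^{(s)}|_F$, use the characterization \eqref{EQMGSU} (valid under the standing assumption $m(G)=\infty$) to conclude $u(a_n)=u(b_n)$ on each finite component, and then observe that the jump terms in the formula of Theorem~\ref{THM5} vanish. The paper's proof is just a terser version of exactly this; your extra remarks on polarization and on the interpretation of $u'$ on $F$ are sound but not needed beyond what Theorem~\ref{THM5} already provides.
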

\begin{proof}
Note that $\check{\mathcal{G}}^{(s)}=\mathcal{G}^{(s)}|_F$. For any finite component $I_n=(a_n,b_n)$ of $G$, it follows from \eqref{EQMGSU} that $u(a_n)=u(b_n)$ for any $u\in \mathcal{G}^{(s)}$. Hence from Theorem~\ref{THM5} we can complete the proof.  
\end{proof}

Take a smooth measure $\mu$ introduced in \S\ref{TBM} as
\[
	\mu(dx)=1_F(x)dx+\sum_{n\geq 1, d_n<\infty}\frac{d_n}{2}(\delta_{a_n}+\delta_{b_n}),
\]
where $\{I_n=(a_n,b_n):n\geq 1\}$ is the set of components of $G$ and $\delta_p$ is the mass of $p$. Let 
\[
	F_0:= F\bigcap I,\quad \mu_0:=\mu|_{F_0},\quad  \check{\mathcal{G}}^{(s)}_0:= \check{\mathcal{G}}^{(s)}\bigcap L^2(F,\mu), 
\]
where $I=(r_-,r_+)$ is given by \eqref{EQIRR}. Similarly to Lemma~\ref{LM6}, $(\check{\mathcal{E}}, \check{\mathcal{G}}^{(s)}_0)$ is a Dirichlet form on $L^2(F,\mu)$ in the wide sense. A regular representation of $(F, \mu, \check{\mathcal{G}}^{(s)}_0, \check{\mathcal{E}})$ can be achieved by the similar darning method to \S\ref{SEC32}. By the darning transform $j|_F$, we may  similarly define $F_0^*, \mu_0^*, \check{\mathcal{G}}^{(s)*}_0$ and $\check{\mathcal{E}}^*$. After a little computation, we have
\[
	F^*_0=\mathbb{R}^*_0, \ \mu^*_0=m^*_0, \ \check{\mathcal{G}}^{(s)*}_0=\mathcal{G}^{(s)*}_0,\ \check{\mathcal{E}}^*={\mathcal{E}}^*. 
\]
Hence we have the following theorem by Theorem~\ref{prop5}. 

\begin{theorem}\label{THM6}
The quadratic form $(\check{\mathcal{E}}^*, \check{\mathcal{G}}_{0}^{(s)*})$ defined above is a regular strongly local Dirichlet form on $L^2(F^*_0, \mu^*_0)$, whose associated diffusion process is {a time-changed absorbing Brownian motion} on $F^*_0$ with the speed measure $\mu^*_0$.
Furthermore, the D-spaces {$(I, m_I,\mathcal{G}^{(s)}_0,\mathcal{E})$} and $(F,\mu, \check{\mathcal{G}}^{(s)}_0,\check{\mathcal{E}})$ are equivalent, and
$$(\mathbb{R}^*_0,m^*_0,\mathcal{G}^{(s)*}_{0}, \mathcal{E}^*)=(F^*_0, \mu^*_0, \check{\mathcal{G}}_{0}^{(s)*},\check{\mathcal{E}}^*)$$  is their common regular representation.
\end{theorem}

In a word, the trace Brownian motion on $F$ may be decomposed as a regular Dirichlet subspace which contains its non-local part and the orthogonal complement which contains its local part and has a regular representation.
The {time-changed absorbing Brownian motion} in Theorem~\ref{THM6} is called the orthogonal darning process of $\check{\mathcal{F}}^{(s)}_\text{e}$ relative to $\mu$. If we replace $\mu$ with another Radon measure $\mu'$ on $\mathbb{R}$ with support $F$, it is similar to obtain that the orthogonal darning process of $\check{\mathcal{F}}^{(s)}_\text{e}$ relative to $\mu'$ is an absorbing time-changed Brownian motion on $\mathbb{R}^*_0$ whose speed measure is $\mu'$. In particular, if $\mu'(dx)=1_F(x)dx$, then it is actually an absorbing Brownian motion on $\mathbb{R}^*_0$. Note that they are equivalent up to a time-change transform.

{
\section*{Acknowledgement}
The authors would like to thank an anonymous reviewer for his helpful suggestions to improve our paper. 
}

\end{document}